\newcommand{\ecal}{\mathcal{E}}
\newcommand{\fcal}{\mathcal{F}}
\newcommand{\pcal}{\mathcal{P}}
\newcommand{\scal}{\mathcal{S}}
\newcommand{\ucal}{\mathcal{U}}
\newcommand{\wh}{\widehat}
\newcommand{\ispa}[1]{\langle \,#1 \,\rangle } 
\newcommand{\ol}{\overline}
\newcommand{\mb}{\mathbb}
\newcommand{\ve}{\varepsilon}
\newcommand{\dsp}{\displaystyle}
\newtheorem{mthm}{{\sc Theorem}}
\newtheorem{thm}{{\sc Theorem}}[section]
\newtheorem{cor}[thm]{{\sc Corollary}}
\newtheorem{lem}[thm]{{\sc Lemma}}
\newtheorem{prop}[thm]{{\sc Proposition}}
\newtheorem{defin}[thm]{{\sc Definition}}
\theoremstyle{definition}
\begin{document}

\title[Grover and Fourier walks]
{Eigenvalues of quantum walks of\\ Grover and Fourier types}
\author{Takashi Komatsu}
\address{Department of Applied Mathematics, Faculty of Engineering, Yokohama National University, 
79-5 Tokiwadai, Hodogaya, Yokohama, 240-8501, Japan}
\email{komatsu-takashi-fn@ynu.ac.jp}
\author{Tatsuya Tate}
\address{Mathematical Institute, Graduate School of Sciences, Tohoku University, 
Aoba, Sendai 980-8578, Japan. }
\email{tate@m.tohoku.ac.jp}
\thanks{The second author is partially supported by JSPS Grant-in-Aid for Scientific Research (No. 25400068, No. 15H02055).}
\date{\today}

\renewcommand{\thefootnote}{\fnsymbol{footnote}}
\renewcommand{\theequation}{\thesection.\arabic{equation}}
\renewcommand{\labelenumi}{{\rm (\arabic{enumi})}}
\renewcommand{\labelenumii}{{\rm (\alph{enumii})}}
\numberwithin{equation}{section}

\begin{abstract}
A necessary and sufficient conditions for certain 
class of periodic unitary transition operators to have eigenvalues are given. 
Applying this, it is shown that Grover walks in any dimension has both of $\pm 1$ as eigenvalues 
and it has no other eigenvalues. It is also shown that the lazy Grover walks in 
any dimension has $1$ as an eigenvalue, and it has no other eigenvalues. 
As a result, a localization phenomenon occurs for these quantum walks. 
A general criterion for the existence of eigenvalues can be applied also to 
certain quantum walks of Fourier type. 
It is shown that the two-dimensional Fourier walk does not have 
eigenvalues and hence it is not localized at any point. 
Some other topics such as Grover walks on the triangular lattice, products and 
deformations of Grover walks are also discussed. 
\end{abstract}

\maketitle

\section{Introduction}\label{INTRO}

{\it Quantum walks}, originally introduced in \cite{ADZ}, 
have recently become popular among computer science, quantum physics, probability theory 
and discrete geometric analysis, and it is a generic word used to mean certain family of probability distributions 
on a parameter space of an orthonormal basis of a separable Hilbert space 
defined by powers of unitary operators called {\it unitary transition operators}. 
The readers may be referred to \cite{Ke}, \cite{Ko2} for historical backgrounds of quantum walks. 
One of peculiar properties of quantum walks is its {\it localization} property, which means that the probability 
that a particle is found at a point does not tend to zero as the time parameter goes to infinity. 
Even one-dimensional quantum walks (\cite{IKS}, \cite{IK}) can have the localization property. 
The localization of quantum walks is closely related to the existence of 
eigenvalues of unitary transition operators. Indeed, it is well-known that a quantum walk with an initial data 
has localized at a point with if the associated unitary transition operator has an eigenvalue. 
However, although there are many investigation on localization of quantum walks 
defined by specific unitary transition operators, 
it seems that few works are devoted to study it from a general viewpoint. 

The {\it periodic unitary transition operators} (PUTO for short) was introduced in \cite{Ta1} 
as unitary operators on the Hilbert space $\ell^{2}(\mb{Z}^{d},\mb{C}^{D})$, 
consisting of all $\mb{C}^{D}$-valued $\ell^{2}$-functions on $\mb{Z}^{d}$, 
commuting with the action of $\mb{Z}^{d}$ and having certain finite propagation property. 
Readers may be referred to \cite{Ta1} for a precise definition and properties of PUTO's. 
In the present paper, we consider a class of PUTO's defined as follows. 
Let $S \subset \mb{Z}^{d}$ be a finite set called a {\it set of steps} and let $\pcal=\{P_{\alpha}\}_{\alpha \in S}$ be 
a resolution of unity on $\mb{C}^{D}$, that is $\pcal=\{P_{\alpha}\}_{\alpha \in S}$ is a finite family of 
orthogonal projections on $\mb{C}^{D}$ satisfying 
\begin{equation}\label{resU1}
P_{\alpha}^{2}=P_{\alpha},\quad P_{\alpha}P_{\beta}=0\ \ (\alpha \neq \beta),\quad 
\sum_{\alpha \in S}P_{\alpha}=I, 
\end{equation}
where $I$ is the identity map on $\mb{C}^{D}$. 
Then the PUTO's we are going to consider are of the form 
\begin{equation}\label{PUTO1}
U(C)=U(S,\pcal,C)=\scal C,\quad 
\scal=\scal(S,\pcal)=\sum_{\alpha \in S} \tau^{\alpha} P_{\alpha},
\end{equation}
where, for $\alpha=(\alpha_{1},\ldots,\alpha_{d}) \in \mb{Z}^{d}$, 
the operator $\tau^{\alpha}$ is defined by 
\begin{equation}\label{ACT}
(\tau^{\alpha}f)(x)=f(x-\alpha) \qquad (f \in \ell^{2}(\mb{Z}^{d},\mb{C}^{D}),\ x \in \mb{Z}^{d}),  
\end{equation}
and $C$ is a $D \times D$ unitary matrix. 
Both of $\scal$ and $C$ are unitary operators on $\ell^{2}(\mb{Z}^{d},\mb{C}^{D})$, 
and they are called the {\it shift operator} and the {\it coin matrix}, respectively. 
(We note that $C$ is a $D \times D$ matrix. 
But it can be regarded as a bounded operator on $\ell^{2}(\mb{Z}^{d},\mb{C}^{D})$ in a natural way.) 
The {\it quantum walk} with the initial state $\phi$ and the time evolution $U(S,\pcal,C)$
is a family of probability distributions $\{p_{n}(\phi;x)\}_{x \in \mb{Z}^{d}}$ 
indexed by a discrete time parameter $n$ defined by
\[
p_{n}(\phi;x)=\|U(S,\pcal,C)^{n}(\delta_{0} \otimes \phi)(x)\|_{\mb{C}^{D}}^{2},  
\]
where, $\phi$ is a unit vector in $\mb{C}^{D}$, $n$ is a positive integer and, 
for $z \in \mb{Z}^{d}$, $\delta_{z} \otimes \phi \in \ell^{2}(\mb{Z}^{d},\mb{C}^{D})$ is defined by 
\[
(\delta_{z} \otimes \phi)(x)=
\begin{cases}
\phi & \mbox{($x=z$)}, \\
0 & \mbox{(otherwise)}. 
\end{cases}
\]
We note that, since the operator $U(S,\pcal,C)$ is commutative 
with the action of $\mb{Z}^{d}$ on $\ell^{2}(\mb{Z}^{d},\mb{C}^{D})$, 
it is enough to consider the origin as the initial position. 
As an example of operators of the form $\eqref{PUTO1}$, 
let us explain usual homogeneous quantum walks with constant coin matrices. 
In the following, let $\{\pmb{e}_{1},\ldots,\pmb{e}_{D}\}$ be the standard orthonormal basis on $\mb{C}^{D}$ 
and let $P_{j}$ be the orthogonal projection onto the $1$-dimensional subspace $\mb{C}\pmb{e}_{j}$ in $\mb{C}^{D}$. 

We set $D=2d$ and $S=S_{{\rm std}}=\{\pm u_{1},\ldots, \pm u_{d}\}$ with the standard orthonormal 
basis $\{u_{1},\ldots,u_{d}\}$ of $\mb{Z}^{d}$. 
We define $\pcal=\pcal_{{\rm std}}=\{P_{\alpha}\}_{\alpha \in S_{{\rm std}}}$ by 
\begin{equation}\label{std1}
P_{u_{j}}=P_{2j-1},\quad P_{-u_{j}}=P_{2j} \quad (j=1,\ldots,d). 
\end{equation}
Then the unitary operator $U(S_{{\rm std}}, \pcal_{{\rm std}}, C)$ has the form
\begin{equation}\label{UQW}
U(S_{{\rm std}}, \pcal_{{\rm std}}, C)=\sum_{j=1}^{d}\left(
\tau_{j}P_{2j-1}C+\tau_{j}^{-1}P_{2j}C
\right),
\end{equation}
where $\tau_{j}=\tau^{u_{j}}$. 
The operator $\eqref{UQW}$ is 
what we call the time evolution of {\it homogeneous quantum walks} with a coin matrix $C$. 

The class of PUTO's of the form $\eqref{PUTO1}$ also contains ``lazy quantum walks'', originally 
defined and investigated in \cite{IKS}, \cite{IK} as a 1-dimensional 3-state model. To explain it, 
we set $D=2d+1$ and we define the set $S_{{\rm lazy}}$ of steps by $S_{{\rm lazy}}=S_{{\rm std}} \cup \{0\}$ and define 
the resolution of unity $\pcal_{{\rm lazy}}=\{P_{\alpha}\}_{\alpha \in S_{{\rm lazy}}}$ by 
\begin{equation}\label{lazy1}
P_{u_{j}}=P_{j},\quad P_{0}=P_{d+1},\quad P_{-u_{j}}=P_{d+1+j} \quad (j=1,\ldots,d). 
\end{equation}
The corresponding PUTO $U(S_{{\rm lazy}},\pcal_{{\rm lazy}},C)$ has the form
\begin{equation}\label{lazyD}
U(S_{{\rm lazy}},\pcal_{{\rm lazy}},C)=P_{0}C+
\sum_{j=1}^{d}\left(
\tau_{j}P_{j}C+\tau_{j}^{-1}P_{d+1+j}C
\right). 
\end{equation}

There are two kind of {\it Grover walks} (\cite{W}, \cite{MBSS}), which named after Grover's investigation on 
quantum search algorithm \cite{G}, one of which is defined by a shift operator called flip-flop shift, 
and another is defined as $U(S_{{\rm std}}, \pcal_{{\rm std}}, G_{2d})$ and $U(S_{{\rm lazy}}, \pcal_{{\rm lazy}}, G_{2d+1})$ 
where, for general positive integer $D$, a $D \times D$ coin matrix $G_{D}$ called {\it Grover matrix} 
is defined by 
\begin{equation}\label{GroC}
G_{D}=\frac{2}{D}J-I,
\end{equation}
with the $D \times D$ matrix $J$ all of whose components are $1$. 

The time evolution of the {\it Fourier walk} (\cite{MBSS}) is defined, for the case of $D=2d=4$, 
as $U(S_{{\rm std}}, \pcal_{{\rm std}}, F_{4})$ where, for positive integer $D \geq 2$, 
the coin matrix $F_{D}$ is the $D \times D$ unitary matrix, defining 
the discrete Fourier transform, given by 
\begin{equation}\label{DF1}
F_{D}=
\frac{1}{\sqrt{D}}
\begin{pmatrix}
1 & 1 & 1 & \cdots & 1 \\
1 & q & q^{2} & \cdots & q^{D-1} \\
\vdots & \vdots & \vdots & \ddots & \vdots \\
1 & q^{D-2} & q^{2(D-2)} & \cdots & q^{(D-1)(D-2)} \\
1 & q^{D-1} & q^{2(D-1)} & \cdots & q^{(D-1)(D-1)}
\end{pmatrix}, 
\end{equation}
where $q=e^{2\pi \sqrt{-1}/D}$. 
The coin matrix $F_{D}$ satisfies $F_{D}^{4}=I$. 
Taking these operators into account, it would be natural to give the following definition. 
\begin{defin}\label{GF1}
Let $C$ be a $D \times D$ unitary matrix which is not a scalar multiple of the identity matrix. 
The quantum walks defined by a PUTO, $U(S,\pcal,C)$, given in $\eqref{PUTO1}$, or $U(S,\pcal,C)$ itself, 
is said to be of {\it Grover type} if the coin matrix $C$ satisfies $C^{2}=I$. 
A PUTO, $U(S,\pcal,C)$, of Grover type is said to be of {\it reflection type} if 
the multiplicity of eigenvalue $1$ of the coin matrix $C$ is $1$. 
A PUTO, $U(S,\pcal,C)$, is said to be of {\it Fourier type} if $C$ satisfies $C^{4}=I$. 
\end{defin}
We remark that a coin matrix of reflection type is negative of a reflection in a usual sense. 
In the following sections, we give some of criterions for $U(S,\pcal,C)$ to have eigenvalues and  
hence for corresponding quantum walks with an initial state to be localized at a point. 
The following theorems will be proved by applying theorems which will be obtained in the subsequent sections. 
\begin{mthm}\label{Main3}
The Fourier walk $U(S_{{\rm std}}, \pcal_{{\rm std}}, F_{4})$ on $\ell^{2}(\mb{Z}^{2},\mb{C}^{4})$ has no eigenvalues. 
Hence the corresponding quantum walk $\{p_{n}(\phi;x)\}_{x \in \mb{Z}^{2}}$ satisfies 
\[
\lim_{n \to \infty}p_{n}(\phi;x)=0
\] 
for any initial state $\phi \in \mb{C}^{4}$ and any point $x \in \mb{Z}^{2}$. 
\end{mthm}

\begin{mthm}\label{Main1}
Both of $\pm 1$ are eigenvalues of 
the Grover walks $U(S_{{\rm std}}, \pcal_{{\rm std}}, G_{2d})$ on $\ell^{2}(\mb{Z}^{d},\mb{C}^{2d})$ 
with $d \geq 2$ and there are no other eigenvalues. 
Let $\Pi_{\pm}$ denote the orthogonal projections 
onto the eigenspaces of eigenvalue $\pm 1$, respectively. Then we have 
\begin{equation}\label{LTA1}
\lim_{N \to \infty}\frac{1}{N}\sum_{n=1}^{N} p_{n}(\phi;x)=\|\Pi_{+}(\delta_{0} \otimes \phi)(x)\|_{\mb{C}^{2d}}^{2}+
\|\Pi_{-}(\delta_{0} \otimes \phi)(x)\|_{\mb{C}^{2d}}^{2} 
\end{equation}
for any $\phi \in \mb{C}^{2d}$ and $x \in \mb{Z}^{d}$. 
Therefore, there exists a vector $\phi_{o} \in \mb{C}^{2d}$ such that 
the corresponding quantum walk $\{p_{n}(\phi_{o};x)\}_{x \in \mb{Z}^{d}}$ is localized at a point $x=x_{o} \in \mb{Z}^{d}$. Namely 
we have 
\begin{equation}\label{loc2}
\limsup_{n \to \infty} p_{n}(\phi_{o};x_{o})>0
\end{equation}
for a vector $\phi_{o} \in \mb{C}^{2d}$ and a point $x_{o} \in \mb{Z}^{d}$. 
\end{mthm}

\begin{mthm}\label{Main2}
The lazy Grover walks $U(S_{{\rm lazy}}, \pcal_{{\rm lazy}}, G_{2d+1})$ on $\ell^{2}(\mb{Z}^{d},\mb{C}^{2d+1})$ 
with $d \geq 1$ has just one eigenvalue $1$, and there are no other eigenvalues. 
Let $\Pi_{+}$ denote the orthogonal projection onto the eigenspace of eigenvalue $1$. 
Then we have 
\[
\lim_{n \to \infty}p_{n}(\phi;x)=\|\Pi_{+}(\delta_{0} \otimes \phi)(x)\|_{\mb{C}^{2d+1}}^{2} 
\]
for any $\phi \in \mb{C}^{2d+1}$ and any $x \in \mb{Z}^{d}$. 
\end{mthm}

In Theorem $\ref{Main2}$, the projection $\Pi_{+}$ onto the eigenspace 
of $U(S_{{\rm lazy}}, \pcal_{{\rm lazy}}, G_{2d+1})$ with eigenvalue $+1$ 
is, in principle, computable. Indeed we have 
\begin{equation}\label{EGF1}
\Pi_{+}(\delta_{0} \otimes \phi)(x)=
\int_{T^{d}} z^{-x} \ispa{\phi,w_{o}(z)}_{\mb{C}^{2d+1}}w_{o}(z)\,d\nu(z) \quad (x \in \mb{Z}^{d}), 
\end{equation}
where $\nu$ is the normalized Lebesgue measure on the $d$-dimensional torus $T^{d}$ 
and the function $w_{o} \in L^{\infty}(T^{d},\mb{C}^{2d+1})$ is given by 
\begin{equation}\label{EGF2}
w_{o}(z) = \frac{1}{D(z)} \left(
\frac{1}{2}\pmb{e}_{d+1} +\sum_{j=1}^{d}\frac{1}{1+z_{j}}
\left(
z_{j}\pmb{e}_{j}+\pmb{e}_{d+1+j}
\right)
\right), \quad 
D(z) = \left(
\frac{1}{4} +2\sum_{j=1}^{d}\frac{1}{|1+z_{j}|^{2}}
\right)^{1/2}.
\end{equation}
The formulas $\eqref{EGF1}$, $\eqref{EGF2}$ are direct consequences of Theorem $\ref{EGS1}$ in Section $\ref{GTYPE}$.
Theorems $\ref{Main3}$, $\ref{Main1}$, $\ref{Main2}$ might be proved in an straightforward way. 
For example, it might be possible 
to prove Theorem $\ref{Main2}$ by computing the characteristic polynomials. 
However, our approach is to prepare a necessary and sufficient conditions for the class of PUTO 
given in $\eqref{PUTO1}$ (Propositions $\ref{gen1}$), of Fourier type (Theorem $\ref{DFT}$) 
or of Grover type (Theorem $\ref{Gro1}$) to have eigenvalues and apply it to obtain these theorems. 
An advantage of this rather indirect method is that properties making operators to have eigenvalues 
become very clear. In fact Theorems $\ref{GroT}$, $\ref{EVN1}$, $\ref{EVNN}$ give 
very useful sufficient condition for the existence and non-existence of eigenvalues. 

It seems that quantum walks of Fourier type 
has a structure richer than quantum walks of Grover type. 
Indeed, it would be rather hard to apply our Theorem $\ref{DFT}$ for 
Fourier walks in higher dimension. 
In contrast, some aspects about localization for quantum walks of Grover types 
in high dimension have been clarified to some extent. 
However, dynamical aspects are not investigated in the present paper. 
It would be interesting and important to consider asymptotic behavior of 
quantum walks of reflection type and their eigenfunctions. 

In Section $\ref{GEN}$, we collect some general facts from \cite{Ta1} and 
give a general necessary and sufficient condition for $U(S,\pcal,C)$ to have an eigenvalue 
under the assumption that the origin $0 \in \mb{Z}^{d}$ is not contained in the set $S$ of steps. 
This is utilized to give a necessary and sufficient condition for $U(S,\pcal,C)$ of 
Fourier type in Section $\ref{FTYPE}$. 
In particular, Theorem $\ref{Main3}$ on Fourier walk $U(S_{{\rm std}}, \pcal_{{\rm std}}, F_{4})$ 
will be proved in Section $\ref{FW2D}$. 
A necessary and sufficient condition for $U(S,\pcal,C)$ of Grover type without assuming $0 \not\in S$ 
will be given in Section $\ref{GTYPE}$. We also give a sufficient condition on PUTO's defined 
by a product of certain two PUTO's of the form $\eqref{PUTO1}$ in Section $\ref{PRO}$. 
In Section $\ref{EXA}$, we give proofs of Theorems $\ref{Main1}$, $\ref{Main2}$, an example 
about a Grover walk on the triangular lattice, and discuss about a one-parameter deformation of Grover walks.

\section{Quantum walks of Fourier type}\label{NFT}

In this section, properties of the PUTO $U(C)=U(S,\pcal,C)$ given in $\eqref{PUTO1}$, in particular 
a necessary and sufficient conditions for $U(C)$ to have eigenvalues, are prepared, 
and then it will be applied to the quantum walks of Fourier type. 

\subsection{Localization with general coin matrices}\label{GEN}

We denote $\fcal:\ell^{2}(\mb{Z}^{d},\mb{C}^{D}) \to L^{2}(T^{d},\mb{C}^{D})$ 
the Fourier transform given by 
\[
(\fcal f)(z)=\sum_{x \in \mb{Z}^{d}} f(x)z^{x} \quad (f \in \ell^{2}(\mb{Z}^{d},\mb{C}^{D}),\, z \in T^{d}), 
\]
where, for $z=(z_{1},\ldots,z_{d}) \in T^{d}$ and $x=(x_{1},\ldots,x_{d}) \in \mb{Z}^{d}$, 
we set $z^{x}=z_{1}^{x_{1}} \cdots z_{d}^{x_{d}}$. 
The Fourier transform $\fcal$ is a unitary operator whose inverse $\fcal^{-1}=\fcal^{*}$ is given by 
\[
(\fcal^{-1}g)(x)=\int_{T^{d}} z^{-x} g(z)\,d\nu(z), 
\]
where $d\nu$ is the normalized Lebesgue measure on $T^{d}$. 
For the operator $U(S,\pcal,C)$ with the coin matrix $C$, 
the unitary operator 
\begin{equation}\label{FOP1}
\ucal(S,\pcal,C)=\fcal U(S,\pcal,C) \fcal^{-1}
\end{equation}
on $L^{2}(T^{d},\mb{C}^{D})$ has the form 
\begin{equation}\label{FOP2}
\ucal(S,\pcal,C)g (z)=\wh{C}(z)g(z) \quad (g \in L^{2}(T^{d},\mb{C}^{D}),\ z \in T^{d}), 
\end{equation}
where, for $z \in T^{d}$, $\wh{C}(z)$ is a $D \times D$ unitary matrix defined by 
\begin{equation}\label{MV1}
\wh{C}(z)=V(z)C,\quad V(z)=\sum_{\alpha \in S}z^{\alpha}P_{\alpha}. 
\end{equation}
The following is well-known (see \cite{Ta1} for instance). 
\begin{lem}\label{twist1}
The unitary operator $U(S,\pcal,C)$ has an eigenvalue $\omega$ if and only if 
$\wh{C}(z)$ has an eigenvalue $\omega$ for any $z \in T^{d}$. 
\end{lem}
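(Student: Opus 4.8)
The plan is to work through the unitary equivalence recorded in $\eqref{FOP1}$ and $\eqref{FOP2}$. Since $\fcal$ is unitary, $U(S,\pcal,C)$ has the eigenvalue $\omega$ if and only if the multiplication operator $\ucal(S,\pcal,C)$ on $L^{2}(T^{d},\mb{C}^{D})$ does, and $\ucal(S,\pcal,C)g=\omega g$ for some nonzero $g$ exactly when there is a nonzero $L^{2}$ section $z\mapsto g(z)$ with $g(z)\in\ker(\wh{C}(z)-\omega I)$ for almost every $z\in T^{d}$. So the lemma is equivalent to the assertion that such a section exists if and only if $\ker(\wh{C}(z)-\omega I)\neq\{0\}$ for \emph{every} $z\in T^{d}$. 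The key structural input, used in both directions, is that by $\eqref{MV1}$ the entries of $\wh{C}(z)=V(z)C$ are trigonometric polynomials in $z$.

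For the ``only if'' part I would take an eigenvector $f\neq0$ of $U(S,\pcal,C)$ and set $g=\fcal f\neq0$, so that $\wh{C}(z)g(z)=\omega g(z)$ a.e.; hence the trigonometric polynomial $p(z):=\det(\wh{C}(z)-\omega I)$ vanishes on $\{z:g(z)\neq0\}$, a set of positive Lebesgue measure since $g\neq0$ in $L^{2}$. A trigonometric polynomial on $T^{d}$ (equivalently, a real-analytic function on the connected manifold $T^{d}$) vanishing on a set of positive measure vanishes identically, so $p\equiv0$ and therefore $\omega\in\spec\wh{C}(z)$ for all $z$.

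For the ``if'' part I would exhibit an $L^{2}$ eigensection by hand, the only real subtlety being measurability of the eigenprojection when the multiplicity of $\omega$ jumps with $z$. I would avoid diagonalizing $\wh{C}(z)$ smoothly and instead use that, each $\wh{C}(z)$ being unitary with $\omega\in\spec\wh{C}(z)$, the Cesàro averages $\frac1N\sum_{n=0}^{N-1}(\ol{\omega}\,\wh{C}(z))^{n}$ converge as $N\to\infty$ to the orthogonal projection $E(z)$ onto $\ker(\wh{C}(z)-\omega I)$, which is nonzero for every $z$. Each partial average is continuous in $z$ (its entries are trigonometric polynomials), so $z\mapsto E(z)$ is a bounded Borel measurable matrix-valued function. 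For each $z$ some standard basis vector $\pmb{e}_{j}$ satisfies $E(z)\pmb{e}_{j}\neq0$, so the measurable sets $\{z:E(z)\pmb{e}_{j}\neq0\}$, $j=1,\dots,D$, cover $T^{d}$; picking one of positive measure and setting $g(z)=E(z)\pmb{e}_{j}$ gives a nonzero element of $L^{\infty}(T^{d},\mb{C}^{D})\subset L^{2}(T^{d},\mb{C}^{D})$ with $\wh{C}(z)g(z)=\omega g(z)$ for all $z$. Then $\ucal(S,\pcal,C)g=\omega g$, and $\fcal^{-1}g$ is the desired eigenvector of $U(S,\pcal,C)$.

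I expect no serious obstacle: the argument uses nothing about the particular form of $S$, $\pcal$, or $C$ beyond $\eqref{MV1}$, and the two points that need care — the passage from vanishing on a positive-measure set to vanishing identically, and the measurability of $E(z)$ — are handled respectively by analyticity of the determinant and by the Cesàro-mean formula for the spectral projection. (An alternative to the Cesàro argument in the ``if'' direction is to take a non-vanishing column of the adjugate matrix of $\wh{C}(z)-\omega I$ when that adjugate is not identically zero, and to iterate on lower-order minors otherwise; I find the projection formula cleaner.)
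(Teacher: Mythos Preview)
The paper does not actually prove this lemma; it merely records it as ``well-known'' and refers to \cite{Ta1}. Your argument is correct and self-contained. The ``only if'' direction is standard: the characteristic polynomial $p(z)=\det(\wh{C}(z)-\omega I)$ is a Laurent polynomial in $z$, hence real-analytic on the connected torus $T^{d}$, and vanishing on a set of positive measure forces $p\equiv 0$. For the ``if'' direction your use of the Ces\`aro means $\frac{1}{N}\sum_{n=0}^{N-1}(\ol{\omega}\,\wh{C}(z))^{n}\to E(z)$ is a clean way to produce a bounded measurable eigenprojection without worrying about multiplicity jumps, and the covering argument with the sets $\{z:E(z)\pmb{e}_{j}\neq 0\}$ correctly yields a nonzero $L^{2}$ eigensection. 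There is no gap.
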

The next lemma is also well-known. But, its proof is given here for convenience and completeness. 
\begin{lem}\label{PL1}
Suppose that $U(C)=U(S,\pcal,C)$ has an eigenvalue. 
Then there exists a non-zero vector $\phi \in \mb{C}^{D}$ such that 
the corresponding quantum walk $\{p_{n}(\phi;x)\}_{x \in \mb{Z}^{d}}$ with initial state $\phi$ 
is localized at a point $x=x_{o} \in \mb{Z}^{d}$. 
\end{lem}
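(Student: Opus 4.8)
The plan is to use Lemma \ref{twist1} together with the spectral (Fourier) representation \eqref{FOP2} to produce a genuine eigenfunction of $U(C)$ in $\ell^{2}(\mb{Z}^{d},\mb{C}^{D})$, and then to detect its presence in the time-averaged return probability via the mean ergodic theorem. First I would let $\omega$ be an eigenvalue of $U(C)=U(S,\pcal,C)$. By Lemma \ref{twist1}, $\wh{C}(z)$ has $\omega$ as an eigenvalue for \emph{every} $z \in T^{d}$. Hence the spectral projection $\Pi_{\omega}(z)$ of $\wh{C}(z)$ onto $\ker(\wh{C}(z)-\omega I)$ is a nonzero orthogonal projection for each $z$; since $z \mapsto \wh{C}(z)$ is continuous (indeed a trigonometric polynomial in $z$ by \eqref{MV1}) and $\omega$ is an isolated point of the finite set $\spec \wh{C}(z)$, the map $z \mapsto \Pi_{\omega}(z)$ is measurable and bounded on $T^{d}$ (one may write it via a Riesz contour integral $\frac{1}{2\pi i}\oint (\zeta I-\wh{C}(z))^{-1}d\zeta$ around $\omega$, which is locally continuous in $z$). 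Multiplication by $\Pi_{\omega}(\cdot)$ is then a nonzero bounded operator on $L^{2}(T^{d},\mb{C}^{D})$ commuting with $\ucal(S,\pcal,C)$, and its range, transported back by $\fcal^{-1}$, is precisely the eigenspace $\ker(U(C)-\omega I) \subset \ell^{2}(\mb{Z}^{d},\mb{C}^{D})$, which is therefore nonzero. Let $\Pi_{\omega}$ denote the orthogonal projection of $\ell^{2}(\mb{Z}^{d},\mb{C}^{D})$ onto this eigenspace.

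Next I would choose the initial state. Since $\Pi_{\omega} \neq 0$, there is some $y \in \mb{Z}^{d}$ and some unit vector $\phi \in \mb{C}^{D}$ with $\Pi_{\omega}(\delta_{y}\otimes \phi) \neq 0$; by $\mb{Z}^{d}$-equivariance of $U(C)$ (and hence of $\Pi_{\omega}$) we may translate and take $y=0$, so $\Pi_{\omega}(\delta_{0}\otimes \phi) \neq 0$. Then there is a point $x_{o} \in \mb{Z}^{d}$ with $\Pi_{\omega}(\delta_{0}\otimes \phi)(x_{o}) \neq 0$, i.e. $\|\Pi_{\omega}(\delta_{0}\otimes \phi)(x_{o})\|_{\mb{C}^{D}}>0$.

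Finally, to extract localization from the existence of this eigenfunction I would apply the mean ergodic theorem to the unitary $\omega^{-1}U(C)$: for any $\psi \in \ell^{2}(\mb{Z}^{d},\mb{C}^{D})$,
\[
\frac{1}{N}\sum_{n=1}^{N}\omega^{-n}U(C)^{n}\psi \longrightarrow \Pi_{\omega}\psi \qquad (N \to \infty)
\]
in $\ell^{2}$, where $\Pi_{\omega}$ is the projection onto $\ker(U(C)-\omega I)$. Evaluating at $\psi=\delta_{0}\otimes\phi$, using that evaluation at a fixed point $x_{o}$ is a bounded linear functional on $\ell^{2}$, and noting $|\omega^{-n}|=1$, one gets $\frac{1}{N}\sum_{n=1}^{N}\omega^{-n}U(C)^{n}(\delta_{0}\otimes\phi)(x_{o}) \to \Pi_{\omega}(\delta_{0}\otimes\phi)(x_{o})$, whose norm is a positive number $c>0$. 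Since $\|\omega^{-n}U(C)^{n}(\delta_{0}\otimes\phi)(x_{o})\|^{2} = p_{n}(\phi;x_{o})$, convergence of the Cesàro means of the vectors forces $\limsup_{n\to\infty} p_{n}(\phi;x_{o}) \geq c^{2} > 0$ (if the $\limsup$ were $0$, the Cesàro means of the vectors would tend to $0$, contradicting $c>0$). Hence the quantum walk with initial state $\phi$ is localized at $x_{o}$.

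The only mildly delicate point is the measurability/boundedness of $z\mapsto \Pi_{\omega}(z)$, needed to conclude that the eigenspace of $U(C)$ is nonzero rather than merely that $\wh{C}(z)$ has pointwise eigenvectors that fail to assemble into an $L^{2}$ section; the contour-integral formula for the spectral projection, together with continuity of $\wh{C}(z)$ and isolation of $\omega$ in the (finite) spectrum, handles this. Everything after that is soft functional analysis. $\square$
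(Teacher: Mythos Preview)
Your proof is correct and shares the paper's core strategy: locate a unit vector $\phi$ and a point $x_o$ with $\Pi_\omega(\delta_0\otimes\phi)(x_o)\neq 0$ using translation-invariance, then use an averaging argument to conclude $\limsup_n p_n(\phi;x_o)>0$. Two differences are worth noting. First, your opening paragraph---constructing the eigenspace via the fibered projections $\Pi_\omega(z)$ and worrying about their measurability---is unnecessary: the hypothesis that $U(C)$ has an eigenvalue $\omega$ means by definition that $\ker(U(C)-\omega I)\neq\{0\}$, so $\Pi_\omega\neq 0$ is immediate, and the paper simply starts from that point. Second, for the averaging step the paper invokes the Wiener formula
\[
\lim_{N\to\infty}\frac{1}{N}\sum_{n=1}^N p_n(\phi;x_o)=\sum_{\omega\,:\,\text{eigenvalue of }U(C)}\|\Pi_\omega(\delta_0\otimes\phi)(x_o)\|_{\mb{C}^D}^2,
\]
which gives the Ces\`aro limit of the probabilities directly; you instead apply von Neumann's mean ergodic theorem to $\omega^{-1}U(C)$ and then extract the $\limsup$ of the norms by a triangle-inequality argument. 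Both routes are standard and essentially equivalent, though the Wiener formula is slightly more direct here.
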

\begin{proof}
Suppose that $U(C)$ has an eigenvalue. Let $\Pi_{\lambda}$ be the orthogonal projection 
onto the eigenspace corresponding to the eigenvalue $\lambda$. 
We note that the functions of the form $\delta_{y} \otimes \phi$ with $y \in \mb{Z}^{d}$ 
and $\phi \in \mb{C}^{D}$ spanns $\ell^{2}(\mb{Z}^{d},\mb{C}^{D})$, and hence 
there exists a vector $\phi_{o} \in \mb{C}^{D}$, points $y,z \in \mb{Z}^{d}$ and an eigenvalue $\lambda$ such 
that $\Pi_{\lambda}(\delta_{y} \otimes \phi_{o})(z) \neq 0$. Since $U(C)$ is commutative with the action $\eqref{ACT}$ 
of $\mb{Z}^{d}$ on $\ell^{2}(\mb{Z}^{d},\mb{C}^{D})$, $\Pi_{\lambda}$ also commutative with this action of $\mb{Z}^{d}$. 
Thus, we have 
\[
0 \neq \Pi_{\lambda}(\delta_{y} \otimes \phi_{o})(z)=
\Pi_{\lambda}(\delta_{0} \otimes \phi_{o})(x_{o})\ \ \ (x_{o}=z-y). 
\]
Then by the Wiener formula, we see 
\begin{equation}\label{wie}
\lim_{N \to \infty}\frac{1}{N}\sum_{n=1}^{N} p_{n}(\phi;x_{o})=
\sum_{\omega\,:\,\mbox{{\scriptsize eigenvalue of $U(C)$}}} \|(\Pi_{\omega}(\delta_{0} \otimes \phi))(x_{o})\|_{\mb{C}^{D}}^{2}>0. 
\end{equation}
Therefore, $\eqref{loc2}$ holds for $\phi_{o}$ at $x=x_{o}$. 
\end{proof}

We denote by $\sigma(A)$ the set of eigenvalues of a matrix $A$. 
For an eigenvalue $\lambda$ of the coin matrix $C$, we denote $\pi_{\lambda}$ 
the projection onto the eigenspace $\ecal(\lambda)$ of $C$ corresponding to $\lambda$, 
and let $\pi_{\lambda}^{\perp}$ denote the projection onto the orthogonal complement $\ecal(\lambda)^{\perp}$ of 
$\ecal(\lambda)$. When $\lambda \not\in \sigma(C)$, we set $\ecal(\lambda)=\{0\}$ and $\pi_{\lambda}=0$. 

{\it In the rest of this section, we assume that the set $S$ of steps does not contain the origin}. 
In this case, the operator $U(C)=U(S,\pcal,C)$ do not have a ``lazy'' part $P_{0}$. 
We define a subset $L$ in $T^{d}$ by 
\[
L=\bigcup_{\alpha \in S} \{z \in T^{d} \mid z^{\alpha}=1\}. 
\]
Since, for $\alpha \in S$, the function $T^{d} \ni z \mapsto z^{\alpha} \in \mb{C}$ is a non-trivial 
homomorphism on the abelian Lie group $T^{d}$, $L$ is a finite union of closed submanifolds in $T^{d}$ (see \cite{BD} for instance). 
For each $z \in T^{d} \setminus L$ and $\omega \in \sigma(C)$, we define a linear map $\eta(\omega;z)$ on $\mb{C}^{D}$ by 
\begin{equation}\label{FUTO1}
\eta(\omega;z)=\sum_{\alpha \in S}\sum_{\lambda \in \sigma(C) \setminus \{\omega\}} 
\frac{1-\omega^{-1}\lambda z^{\alpha}}{1-z^{\alpha}} P_{\alpha}\pi_{\lambda}. 
\end{equation}
It is clear that $\eta(\omega;z)\ecal(\omega)=\{0\}$. Furthermore, we see that 
\begin{equation}\label{FUTO2}
\eta(\omega;z)=\pi_{\omega}^{\perp}
+\sum_{\alpha \in S}
\frac{z^{\alpha}}{1-z^{\alpha}} P_{\alpha}(I -\omega^{-1}C)\pi_{\omega}^{\perp}
=\sum_{\alpha \in S}\frac{1}{1-z^{\alpha}} P_{\alpha} (I-z^{\alpha}\omega^{-1}C) \pi_{\omega}^{\perp}.
\end{equation}
We remark that if $U(C)$ has an eigenvalue $\omega$, 
then $\omega$ is also an eigenvalue of the unitary matrix $C$. 
The following proposition will not be used until Section $\ref{FTYPE}$. 
\begin{prop}\label{gen1}
Suppose that $S$ does not contain the origin. 
Suppose also that $C$ is not a scalar multiple of the identity matrix. Let $\omega \in \sigma(C)$. 
Then, the following statements are equivalent. 
\begin{enumerate}
\item $\omega$ is an eigenvalue of the unitary operator $U(C)$. 
\item For each $z \in T^{d} \setminus L$, there exists a non-zero vector 
$\psi \in \ecal(\omega)^{\perp}$ such that $\eta(\omega;z)\psi \in \ecal(\omega)$. 
\end{enumerate}
\end{prop}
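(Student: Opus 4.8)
The plan is to use Lemma \ref{twist1} to reduce the statement to a pointwise question on the matrices $\wh{C}(z)=V(z)C$, and then to analyze when $\wh{C}(z)$ has the eigenvalue $\omega$ by decomposing $\mb{C}^{D}$ along the eigenspaces of $C$. By Lemma \ref{twist1}, statement (1) is equivalent to $\wh{C}(z)$ having $\omega$ as an eigenvalue for every $z \in T^{d}$. Since $L$ has measure zero and the set of $z$ for which $\wh{C}(z)$ has a given eigenvalue is closed (it is the zero set of $z\mapsto \det(\wh{C}(z)-\omega I)$, a continuous — indeed real-analytic — function), it suffices to work on the dense open set $T^{d}\setminus L$; I would note this reduction explicitly at the start so that the equivalence with (2), which only quantifies over $z \in T^{d}\setminus L$, makes sense.

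Next, fix $z \in T^{d}\setminus L$ and look for $0\neq v\in\mb{C}^{D}$ with $\wh{C}(z)v=\omega v$, i.e. $V(z)Cv=\omega v$, equivalently $Cv=\omega V(z)^{-1}v$. Since $z\notin L$ we have $z^{\alpha}\neq 1$ for all $\alpha\in S$, so $V(z)=\sum_{\alpha}z^{\alpha}P_{\alpha}$ is invertible with $V(z)^{-1}=\sum_{\alpha}z^{-\alpha}P_{\alpha}$; the eigenvector equation can be rewritten, after setting $w=Cv$ (so $v=C^{-1}w=C^{*}w$), as a relation among the components of $w$ in the eigenspaces $\ecal(\lambda)$ of $C$. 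The key computation is to show that the existence of such $v$ is equivalent to the existence of $\psi\in\ecal(\omega)^{\perp}$, $\psi\neq 0$, with $\eta(\omega;z)\psi\in\ecal(\omega)$. Concretely, I would write $v=\pi_{\omega}v+\pi_{\omega}^{\perp}v$; the component $\pi_{\omega}^{\perp}v=\psi$ is the essential datum (if $\psi=0$ then $v\in\ecal(\omega)$ and $\wh{C}(z)v=V(z)\omega v$, which forces $v=0$ unless $V(z)$ acts as identity on $\mb{C}v$, impossible for $z\notin L$ when things are set up correctly — this is where one uses that $C$ is not scalar, so $\ecal(\omega)\neq\mb{C}^{D}$). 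Then, projecting the eigenvalue equation onto each $P_{\alpha}$ and using the explicit form \eqref{FUTO1}/\eqref{FUTO2} of $\eta(\omega;z)$, one checks that the $\ecal(\omega)^{\perp}$-part of the equation is automatically solvable for the $\pi_{\omega}v$-part precisely when $\eta(\omega;z)\psi$ lands in $\ecal(\omega)$.

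The main obstacle, and the step requiring the most care, is the bookkeeping in this decomposition: one must verify that the identities in \eqref{FUTO2} hold (rewriting $\sum_{\alpha}\frac{1-\omega^{-1}\lambda z^{\alpha}}{1-z^{\alpha}}P_{\alpha}\pi_{\lambda}$ summed over $\lambda\neq\omega$ as $\pi_{\omega}^{\perp}+\sum_{\alpha}\frac{z^{\alpha}}{1-z^{\alpha}}P_{\alpha}(I-\omega^{-1}C)\pi_{\omega}^{\perp}$, using $\sum_{\lambda}\pi_{\lambda}=I$ and $C\pi_{\lambda}=\lambda\pi_{\lambda}$), and that splitting $v$ and the equation $V(z)Cv=\omega v$ along $\{P_{\alpha}\}$ and along $\{\pi_{\lambda}\}$ correctly produces the map $\eta$. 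I would carry out the forward direction (1)$\Rightarrow$(2) by taking an eigenfunction, Fourier-transforming, restricting to $T^{d}\setminus L$, and reading off $\psi(z)$; and the reverse direction (2)$\Rightarrow$(1) by showing that the pointwise solutions $v(z)$ can be normalized and assembled into an $L^{2}$ (indeed $L^{\infty}$) eigenfunction of $\ucal(S,\pcal,C)$ — here measurability/boundedness of $z\mapsto v(z)$ on $T^{d}\setminus L$ is the technical point, which follows since $\eta(\omega;z)$ depends real-analytically on $z$ and the relevant linear-algebra data has locally constant rank on a dense open subset. Finally I would invoke Lemma \ref{PL1} to connect back to localization, though that is not needed for the bare equivalence stated in Proposition \ref{gen1}.
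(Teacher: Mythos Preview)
Your outline is essentially the paper's own argument: reduce via Lemma~\ref{twist1} to the pointwise eigenvalue problem for $\wh{C}(z)$, pass to the dense open set $T^{d}\setminus L$ by continuity of $\det(\wh{C}(z)-\omega I)$, and then decompose a putative eigenvector as $v=\pi_{\omega}v+\pi_{\omega}^{\perp}v$ and project onto the $P_{\alpha}$'s.

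Two points are worth tightening. First, your plan for $(2)\Rightarrow(1)$ takes an unnecessary detour: you propose to assemble the pointwise eigenvectors $v(z)$ into a measurable $L^{2}$ section and invoke rank-constancy of $\eta(\omega;z)$. But you have already reduced (1) to the statement ``$\wh{C}(z)$ has eigenvalue $\omega$ for every $z\in T^{d}\setminus L$'', so no global eigenfunction is needed --- it suffices to produce, for each fixed $z$, a single nonzero eigenvector. The paper does this in one line: given $\psi\in\ecal(\omega)^{\perp}$ with $\eta(\omega;z)\psi\in\ecal(\omega)$, set $\phi=\psi-\eta(\omega;z)\psi$ and check directly (using the second identity in \eqref{FUTO2}) that $\wh{C}(z)\phi=\omega\phi$. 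This eliminates all measurability and real-analyticity considerations.

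Second, your justification that $\psi=\pi_{\omega}^{\perp}v\neq 0$ is slightly misattributed. It does not use that $C$ is non-scalar: if $v\in\ecal(\omega)$ satisfies $\wh{C}(z)v=\omega v$, then $V(z)v=v$, i.e.\ $(z^{\alpha}-1)P_{\alpha}v=0$ for every $\alpha\in S$, and since $z\notin L$ this forces $P_{\alpha}v=0$ for all $\alpha$, hence $v=0$. (In the paper the same conclusion falls out of the identity $\pi_{\omega}\phi=-\eta(\omega;z)\pi_{\omega}^{\perp}\phi$.) The hypothesis that $C$ is not scalar is only there so that $\ecal(\omega)^{\perp}\neq\{0\}$ and condition (2) is not vacuous.
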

\begin{proof}
First we assume that the statement $(1)$ holds. Then, for any $z \in T^{d}$, 
$\omega$ is an eigenvalue of the unitary matrix $\wh{C}(z)$ given in $\eqref{MV1}$. 
We fix a point $z \in T^{d} \setminus L$. Let us take an eigenvector $0 \neq \phi \in \mb{C}^{D}$ of $\wh{C}(z)$ 
with the eigenvalue $\omega$. 
Then, writing $\phi=\pi_{\omega}\phi+\pi_{\omega}^{\perp}\phi$ and using $\eqref{resU1}$, we have 
\[
\begin{split}
0 & = \wh{C}(z)\phi -\omega \phi=\sum_{\alpha \in S} z^{\alpha}P_{\alpha}C(\pi_{\omega} \phi +\pi_{\omega}^{\perp}\phi)
-\omega \pi_{\omega}\phi -\omega \pi_{\omega}^{\perp}\phi \\
& = -\sum_{\alpha \in S} P_{\alpha}
\left[
\omega(1-z^{\alpha}) \pi_{\omega} 
+(\omega -z^{\alpha}C)\pi_{\omega}^{\perp}
\right]\phi. 
\end{split}
\]
Applying $P_{\alpha}$ to the above equation, we have 
\[
P_{\alpha}\pi_{\omega}\phi =-\frac{1}{1-z^{\alpha}} P_{\alpha}(1-z^{\alpha}\omega^{-1}C) \pi_{\omega}^{\perp}\phi.
\]
Summing this over all $\alpha \in S$ and using $\eqref{FUTO2}$, we obtain
\begin{equation}\label{eigenE1}
\pi_{\omega}\phi=-\eta(\omega;z)\phi=-\eta(\omega;z)\pi_{\omega}^{\perp}\phi.
\end{equation}
We see, by $\eqref{eigenE1}$, that $\pi_{\omega}^{\perp}\phi \neq 0$ and 
$\eta(\omega;z)\pi_{\omega}^{\perp}\phi \in \ecal(\omega)$. 
This implies (2). 

Conversely, we suppose that the statement $(2)$ holds. We fix $z \in T^{d} \setminus L$. 
We take a non-zero $\psi \in \ecal(\omega)^{\perp}$ such that $\eta(\omega;z)\psi \in \ecal(\omega)$. 
Here we remark that $\eta(\omega;z)\psi$ could be zero. 
We set $\phi=\psi-\eta(\omega;z)\psi$. Then $\phi \neq 0$ and it follows from $\eqref{FUTO2}$ that 
\[
\phi=(I-\eta(\omega;z))\psi=-\sum_{\alpha \in S} \frac{z^{\alpha}}{1-z^{\alpha}} P_{\alpha}(I-\omega^{-1}C)\psi.
\]
A direct computation 
using the property that $\eta(\omega;z)\psi \in \ecal(\omega)$ and the above equation shows that 
$\wh{C}(z)\phi=\omega \phi$, which shows that $\omega$ is an eigenvalue of $\wh{C}(z)$ for any $z \in T^{d} \setminus L$. 
Since the characteristic polynomial of $\wh{C}(z)$ is continuous in $z \in T^{d}$ and $T^{d} \setminus L$ is an open dense 
set in $T^{d}$, $\omega$ is an eigenvalue of $\wh{C}(z)$ for any $z \in T^{d}$, and hence we have (1). 
\end{proof}

\subsection{Quantum walks of Fourier type}\label{FTYPE}

In this subsection, we utilize Proposition $\ref{gen1}$ to analyze quantum walks of Fourier type.  
Let $U(S,\pcal,C)$ be a PUTO of Fourier type. Thus the coin matrix $C$ 
is not a scalar multiple of the identity matrix and satisfies $C^{4}=I$. 
In this case $\sigma(C) \subset \{\pm 1,\pm i\}$. 
As a convention, we set $\pi_{\lambda}=0$ when $\lambda \not\in \sigma(C)$. 
Thus there are possibly four spectral projections $\pi_{1}$, $\pi_{i}$, $\pi_{-1}$, $\pi_{-i}$ of $C$. 
The linear map $\eta(\omega;z)$ defined by $\eqref{FUTO1}$ for $z \in T^{d} \setminus E$ is written as 
\begin{equation}\label{DF2}
\begin{split}
\eta(1;z) & = M(z)\pi_{i} +K(z)\pi_{-1} +L(z)\pi_{-i},\quad 
\eta(i;z)  = L(z)\pi_{1} + M(z) \pi_{-1} + K(z) \pi_{-i}, \\
\eta(-1;z) & = K(z) \pi_{1} +L(z) \pi_{i} +M(z) \pi_{-i},\quad 
\eta(-i;z)  = M(z) \pi_{1} + K(z) \pi_{i} +L(z) \pi_{-1}, 
\end{split}
\end{equation}
where the matrices $K(z)$, $L(z)$, $M(z)$ are given by 
\begin{equation}\label{DF3}
K(z)=\sum_{\alpha \in S} \frac{1+z^{\alpha}}{1-z^{\alpha}} P_{\alpha},\quad 
L(z)=\sum_{\alpha \in S} \frac{1+iz^{\alpha}}{1-z^{\alpha}} P_{\alpha},\quad 
M(z)=\sum_{\alpha \in S} \frac{1-iz^{\alpha}}{1-z^{\alpha}} P_{\alpha}.
\end{equation}
Thus, in this case, Proposition $\ref{gen1}$ can be rewritten in the following form. 
\begin{thm}\label{DFT}
Suppose that the PUTO $U(C)=U(S,\pcal,C)$ is of Fourier type. 
Then, the following statements {\rm (a-$k$)} and {\rm (b-$k$)} are equivalent to each other for each $k=0,1,2,3$. 
\begin{itemize}
\item[{\rm (a-$k$)}] $U(C)$ has an eigenvalue $i^{k}$. 
\item[{\rm (b-$k$)}] For any $z \in T^{d} \setminus L$, there exists a vector $\psi \in \ecal(i^{k})^{\perp}$ such that 
$\psi \neq 0$ and $\eta(i^{k};z) \psi \in \ecal(i^{k})$. 
\end{itemize}
Here $\eta(i^{k};z)$ are given by $\eqref{DF2}$, $\eqref{DF3}$.
\end{thm}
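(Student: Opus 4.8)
The plan is to read Theorem \ref{DFT} off from Proposition \ref{gen1}. Since $U(C)$ is of Fourier type we have $C^{4}=I$, hence $\sigma(C)\subseteq\{1,i,-1,-i\}=\{i^{k}:k=0,1,2,3\}$, and by the remark preceding Proposition \ref{gen1} the only possible eigenvalues of $U(C)$ are these four numbers. So for each $k$ the equivalence (a-$k$) $\Leftrightarrow$ (b-$k$) should follow from the equivalence (1) $\Leftrightarrow$ (2) of Proposition \ref{gen1} applied with $\omega=i^{k}$ (recall that we are in the setting $0\notin S$ that is in force throughout this section, so that $L$, $\eta(\omega;z)$ and Proposition \ref{gen1} are all available). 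Two points then need to be settled: (i) that the map $\eta(i^{k};z)$ of \eqref{FUTO1} coincides with the one written out in \eqref{DF2}--\eqref{DF3}; and (ii) that the equivalence survives the degenerate case $i^{k}\notin\sigma(C)$, which Proposition \ref{gen1} does not formally cover, its hypothesis being $\omega\in\sigma(C)$.

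For point (i) I would argue directly from \eqref{FUTO1}. Fix $\omega=i^{k}$; as $\lambda$ ranges over the fourth roots of unity other than $\omega$, the scalar $\omega^{-1}\lambda$ ranges over $\{i,-1,-i\}$, and comparing with \eqref{DF3} one sees that $\omega^{-1}\lambda=i$ produces the coefficient $\tfrac{1-iz^{\alpha}}{1-z^{\alpha}}$ of $M(z)$, that $\omega^{-1}\lambda=-1$ produces $\tfrac{1+z^{\alpha}}{1-z^{\alpha}}$, the coefficient of $K(z)$, and that $\omega^{-1}\lambda=-i$ produces $\tfrac{1+iz^{\alpha}}{1-z^{\alpha}}$, the coefficient of $L(z)$. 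Grouping the double sum in \eqref{FUTO1} by $\lambda$ and using $\sum_{\alpha\in S}\tfrac{1+z^{\alpha}}{1-z^{\alpha}}P_{\alpha}=K(z)$ together with the analogous identities for $L(z),M(z)$ then yields the four lines of \eqref{DF2}; the only care needed is to match, for each $k$ separately, which of the remaining three fourth roots of unity is paired with which of $K(z),L(z),M(z)$. This is a finite, mechanical verification (one line per value of $k$), and terms with $\lambda\notin\sigma(C)$ simply drop out by the convention $\pi_{\lambda}=0$.

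Once (i) is in place, for every $k$ with $i^{k}\in\sigma(C)$ the statements (a-$k$) and (b-$k$) are verbatim statements (1) and (2) of Proposition \ref{gen1} with $\omega=i^{k}$ (the hypothesis that $C$ is not a scalar multiple of $I$ holds by the definition of Fourier type), so nothing further is required. For $k$ with $i^{k}\notin\sigma(C)$ I would argue separately: here $\pi_{i^{k}}=0$ and $\ecal(i^{k})=\{0\}$, so \eqref{FUTO2} gives $\eta(i^{k};z)=\sum_{\alpha\in S}\tfrac{1}{1-z^{\alpha}}P_{\alpha}(I-i^{-k}z^{\alpha}C)$; since the ranges of the $P_{\alpha}$ are mutually orthogonal and $1-z^{\alpha}\neq0$ on $T^{d}\setminus L$, the condition $\eta(i^{k};z)v=0$ is equivalent to $P_{\alpha}(I-i^{-k}z^{\alpha}C)v=0$ for every $\alpha$, which in turn (summing over $\alpha$, and conversely applying each $P_{\alpha}$) is equivalent to $\wh{C}(z)v=i^{k}v$. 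Thus in this case (b-$k$) asserts exactly that $i^{k}\in\sigma(\wh{C}(z))$ for every $z\in T^{d}\setminus L$; since the characteristic polynomial of $\wh{C}(z)$ depends continuously on $z$ and $T^{d}\setminus L$ is dense in $T^{d}$, this is equivalent to $i^{k}\in\sigma(\wh{C}(z))$ for all $z\in T^{d}$, which by Lemma \ref{twist1} is precisely (a-$k$) (and, consistently with the remark before Proposition \ref{gen1}, both sides are then false).

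I do not anticipate a genuine obstacle: the argument is essentially bookkeeping together with an appeal to Proposition \ref{gen1}. The two places that demand attention are getting the assignment $\lambda\leftrightarrow\{K(z),L(z),M(z)\}$ correct in each of the four rows of \eqref{DF2}, which is easy to permute by accident, and remembering to dispose of the degenerate case $i^{k}\notin\sigma(C)$ by hand rather than silently invoking Proposition \ref{gen1}.
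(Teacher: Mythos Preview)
Your approach is essentially the same as the paper's: the paper derives \eqref{DF2}--\eqref{DF3} from \eqref{FUTO1} and then states that Theorem~\ref{DFT} is just Proposition~\ref{gen1} ``rewritten'' in this form, without further proof. Your proposal is in fact more careful than the paper, since you explicitly handle the degenerate case $i^{k}\notin\sigma(C)$ (which Proposition~\ref{gen1} does not formally cover); the paper glosses over this point entirely.
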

For example, when $-i$ is not the eigenvalues of $C$, 
the formula $\eqref{DF2}$ is simplified since $\pi_{-i}=0$. 
This is the case where $C=F_{4}$ defined in $\eqref{DF1}$ for $D=4$, 
although in higher dimension the coin matrix $F_{D}$ has definitely four eigenvalues as computed in \cite{Ma}. 

\subsection{Two dimensional Fourier walk}\label{FW2D}
Let us use Theorem $\ref{DFT}$ to prove Theorem $\ref{Main3}$. 
The two dimensional Fourier walk is $U(F_{4})=U(S_{{\rm std}}, \pcal_{{\rm std}}, F_{4})$ 
acting on $\ell^{2}(\mb{Z}^{2},\mb{C}^{4})$. 
The eigenvalues of $F_{4}$ are $1$, $-1$, $i$ and their multiplicities are $2$, $1$, $1$, respectively. 
An orthonormal basis of eigenvectors are given by 
\[
\frac{1}{\sqrt{2}}
\begin{bmatrix}
1 \\
0 \\
1 \\
0
\end{bmatrix},\quad 
\frac{1}{\sqrt{4}}
\begin{bmatrix}
1 \\
1 \\
-1 \\
1
\end{bmatrix}\ \ (\mbox{eigenvalue $1$}),\quad 
\frac{1}{\sqrt{2}}
\begin{bmatrix}
0 \\
1 \\
0 \\
-1
\end{bmatrix}\ \ (\mbox{eigenvalue $i$}), \quad
\frac{1}{\sqrt{4}}
\begin{bmatrix}
-1 \\
1 \\
1 \\
1
\end{bmatrix}\ \ (\mbox{eigenvalue $-1$}),
\]
and the eigenspaces are written as 
\[
\begin{split}
\ecal(1) & = \{\,\,\!\!^{t}[s+t,t,s-t,t] \in \mb{C}^{4} \mid s,t \in \mb{C}\},\\
\ecal(i) & = \{\,\,\!\!^{t}[0,s,0,-s] \in \mb{C}^{4} \mid s \in \mb{C}\}, \\ 
\ecal(-1)& = \{\,\,\!\!^{t}[-s,s,s,s] \in \mb{C}^{4} \mid s \in \mb{C}\}.
\end{split}
\]
Their orthogonal complements are
\[
\begin{split}
\ecal(1)^{\perp} & = \{\,\,\!\!^{t}[a+b,-2a,-(a+b),-2b] \in \mb{C}^{4} \mid a,b \in \mb{C}\},\\
\ecal(i)^{\perp} & = \{\,\,\!\!^{t}[a,b,c,b] \in \mb{C}^{4} \mid a,b,c \in \mb{C}\}, \\ 
\ecal(-1)^{\perp}& = \{\,\,\!\!^{t}[a+b+c,a,b,c] \in \mb{C}^{4} \mid a,b,c \in \mb{C}\}.
\end{split}
\]
The projections $\pi_{\pm 1}$, $\pi_{i}$ are given by 
\[
\begin{gathered}
\pi_{1}=\frac{1}{4}
\begin{bmatrix}
3 & 1 & 1 & 1 \\
1 & 1 & -1 & 1 \\
1 & -1 & 3 & -1 \\
1 & 1 & -1 & 1 
\end{bmatrix},\quad
\pi_{i}=\frac{1}{2}
\begin{bmatrix}
0 & 0 & 0 & 0 \\
0 & 1 & 0 & -1 \\
0 & 0 & 0 & 0 \\
0 & -1 & 0 & 1
\end{bmatrix},\quad
\pi_{-1}=\frac{1}{4}
\begin{bmatrix}
1 & -1 & -1 & -1 \\
-1 & 1 & 1 & 1 \\
-1 & 1 & 1 & 1 \\
-1 & 1 & 1 & 1
\end{bmatrix}.
\end{gathered}
\]
The matrices $K(z)$, $L(z)$, $M(z)$ are given by 
\[
\begin{gathered}
K(z)=
\begin{bmatrix}
\frac{1+z_{1}}{1-z_{1}} & 0 & 0 & 0 \\
0 & -\frac{1+z_{1}}{1-z_{1}} & 0 & 0 \\
0 & 0 & \frac{1+z_{2}}{1-z_{2}} & 0 \\
0 & 0 & 0 & -\frac{1+z_{2}}{1-z_{2}}
\end{bmatrix},\\
L(z)=
\begin{bmatrix}
\frac{1+iz_{1}}{1-z_{1}} & 0 & 0 & 0 \\
0 & -\frac{i+z_{1}}{1-z_{1}} & 0 & 0 \\
0 & 0 & \frac{1+iz_{2}}{1-z_{2}} & 0 \\
0 & 0 & 0 & -\frac{i+z_{2}}{1-z_{2}}
\end{bmatrix},\quad 
M(z)=
\begin{bmatrix}
\frac{1-iz_{1}}{1-z_{1}} & 0 & 0 & 0 \\
0 & \frac{i-z_{1}}{1-z_{1}} & 0 & 0 \\
0 & 0 & \frac{1-iz_{2}}{1-z_{2}} & 0 \\
0 & 0 & 0 & \frac{i-z_{2}}{1-z_{2}}
\end{bmatrix}.
\end{gathered}
\]
Thus, the matrix $\eta(i^{k};z)$ (in this case, we only need to consider the case $k=0,1,2$) is given by 
\[
\eta(1;z) =\frac{1}{4}
\begin{bmatrix}
\frac{1+z_{1}}{1-z_{1}} &-\frac{1+z_{1}}{1-z_{1}} &-\frac{1+z_{1}}{1-z_{1}} & -\frac{1+z_{1}}{1-z_{1}} \\[5pt]
\frac{1+z_{1}}{1-z_{1}} &\frac{2i-1-3z_{1}}{1-z_{1}} & -\frac{1+z_{1}}{1-z_{1}} & \frac{-2i-1+z_{1}}{1-z_{1}} \\[5pt]
-\frac{1+z_{2}}{1-z_{2}} &\frac{1+z_{2}}{1-z_{2}} &\frac{1+z_{2}}{1-z_{2}} & \frac{1+z_{2}}{1-z_{2}} \\[5pt]
\frac{1+z_{2}}{1-z_{2}} &\frac{-2i -1 +z_{2}}{1-z_{2}} & -\frac{1+z_{2}}{1-z_{2}} & \frac{2i-1-3z_{2}}{1-z_{2}}
\end{bmatrix},
\]
\[
\eta(i;z) =\frac{1}{4}
\begin{bmatrix}
\frac{4+2iz_{1}}{1-z_{1}} & \frac{2iz_{1}}{1-z_{1}} & \frac{2iz_{1}}{1-z_{1}} & \frac{2iz_{1}}{1-z_{1}} \\[5pt]
\frac{-2i}{1-z_{1}} & \frac{-2z_{1}}{1-z_{1}} & \frac{2i}{1-z_{1}} & \frac{-2z_{1}}{1-z_{1}} \\[5pt]
\frac{2iz_{2}}{1-z_{2}} & \frac{-2iz_{2}}{1-z_{2}} & \frac{4+2iz_{2}}{1-z_{2}} & \frac{-2iz_{2}}{1-z_{2}} \\[5pt]
\frac{-2i}{1-z_{2}} & \frac{-2z_{2}}{1-z_{2}} & \frac{2i}{1-z_{2}} & \frac{-2z_{2}}{1-z_{2}}
\end{bmatrix}, 
\]
\[
\eta(-1;z) =\frac{1}{4}
\begin{bmatrix}
\frac{3+3z_{1}}{1-z_{1}} & \frac{1+z_{1}}{1-z_{1}} & \frac{1+z_{1}}{1-z_{1}} & \frac{1+z_{1}}{1-z_{1}} \\[5pt]
-\frac{1+z_{1}}{1-z_{1}} & \frac{-2i-1-3z_{1}}{1-z_{1}} & \frac{1+z_{1}}{1-z_{1}} & \frac{2i-1+z_{1}}{1-z_{1}} \\[5pt]
\frac{1+z_{2}}{1-z_{2}} & -\frac{1+z_{2}}{1-z_{2}} & \frac{3+3z_{2}}{1-z_{2}} & -\frac{1+z_{2}}{1-z_{2}} \\[5pt]
-\frac{1+z_{2}}{1-z_{2}} & \frac{2i-1+z_{2}}{1-z_{2}} & \frac{1+z_{2}}{1-z_{2}} & \frac{-2i-1-3z_{2}}{1-z_{2}}
\end{bmatrix}. 
\]
We see 
\[
\begin{split}
\eta(1;z)
\begin{bmatrix}
a+b \\
-2a \\
-(a+b) \\
-2b
\end{bmatrix}
& =
\begin{bmatrix}
(1+z_{1})(a+b)/(1-z_{1}) \\
[(1-i+2z_{1})a+(1+i)b]/(1-z_{1}) \\
-(1+z_{2})(a+b)/(1-z_{2}) \\
[(1+i)a+(1-i+2z_{2})b]/(1-z_{2})
\end{bmatrix}, \\
\eta(i;z)
\begin{bmatrix}
a \\
b \\
c \\
b
\end{bmatrix}
& = 
\begin{bmatrix}
[iz_{1}(a+2b+c)+2a]/2(1-z_{1}) \\
[-i(a-c)-2z_{1}b]/2(1-z_{1}) \\
[iz_{2}(a-2b+c)+2c]/2(1-z_{2}) \\
[-i(a-c)-2z_{2}b]/2(1-z_{2})
\end{bmatrix},\\
\eta(-1;z)
\begin{bmatrix}
a+b+c\\
a\\
b\\
c
\end{bmatrix} 
& = 
\begin{bmatrix}
(1+z_{1})(a+b+c)/(1-z_{1}) \\
[-(1+i+2z_{1})a+(i-1)c]/2(1-z_{1}) \\
(1+z_{2})b/(1-z_{2}) \\
[(i-1)a-(i+1+2z_{2})c]/2(1-z_{2})
\end{bmatrix}.
\end{split}
\]
For $\psi=\,\!^{t}(a+b,-2a,-(a+b),-2b) \in \ecal(1)^{\perp}$, $\eta(1;z)\psi \in \ecal(1)$ if and only if 
\[
\begin{bmatrix}
-2i+(3+i)z_{1}-(1-i)z_{2}-2z_{1}z_{2} & 2i+(1-i)z_{1}-(3+i)z_{2}+2z_{1}z_{2} \\
2[-i+2z_{1}-(1-i)z_{2}-z_{1}z_{2}] & 2[i-(1+i)z_{2}+z_{1}z_{2}]
\end{bmatrix}
\begin{bmatrix}
a \\
b
\end{bmatrix}
=
\begin{bmatrix}
0 \\
0
\end{bmatrix}. 
\]
The determinant of the $2 \times 2$ matrix appeared in the above equals 
\[
-4(1-i)(z_{1}-z_{2})^{2}
\]
which does not vanish if $z_{1} \neq z_{2}$. 
Thus, when $z_{1} \neq z_{2}$, only $\psi=0$ satisfies $\eta(1;z)\psi \in \ecal(1)$, and hence 
$U(F_{4})$ does not have eigenvalue $1$. 
For $\psi=\,\!^{t}(a,b,c,b) \in \ecal(i)^{\perp}$, $\eta(i;z)\psi \in \ecal(i)$ if and only if 
\[
\begin{bmatrix}
i(2-z_{1}-z_{2}) & 2(z_{1}+z_{2}-2z_{1}z_{2}) & -i(2-z_{1}-z_{2}) \\
2+iz_{1} & 2iz_{1} & iz_{1} \\
iz_{2} & -2iz_{2} & 2+iz_{2}
\end{bmatrix}
\begin{bmatrix}
a \\
b \\
c
\end{bmatrix}=
\begin{bmatrix}
0 \\
0 \\
0
\end{bmatrix}.
\]
The characteristic polynomial of the $3 \times 3$ matrix appearing in the above equation is 
\[
4(z_{1}+z_{2}-2)^{2}-4i (z_{1}+z_{2}-2z_{1}z_{2})^{2} +16 (z_{1}z_{2}-1) +16iz_{1}z_{2}(z_{1}z_{2}-1). 
\]
When $z_{1}=z_{2}=z$, the above expression becomes 
\[
32i z(z-1)(z-i). 
\]
Since $z \in S^{1}$, the above does not vanish when $z \neq 1,i$. 
Hence $U(F_{4})$ does not have eigenvalue $i$. 
Finally, for $\psi=\,\!^{t}(a+b+c,a,b,c) \in \ecal(-1)^{\perp}$, $\eta(-1;z)\psi \in \ecal(-1)$ if and only if 
\[
\begin{bmatrix}
(1+z_{1})(1-z_{2}) & 2(1-z_{1}z_{2}) & (1+z_{1})(1-z_{2}) \\
1-i & 2(1+z_{1}) & 1+i+2z_{1} \\
1-i & 2(1+z_{2}) & 1+i+2z_{2}
\end{bmatrix}
\begin{bmatrix}
a \\
b \\
c
\end{bmatrix}=
\begin{bmatrix}
0 \\
0 \\
0
\end{bmatrix}.
\]
The characteristic polynomial of the $3 \times 3$ matrix appearing in the above equation is 
\[
-4(1-i)(z_{2}-z_{1})^{2}. 
\]
This is zero if and only if $z_{1}=z_{2}$. Therefore, $U(F_{4})$ does not have eigenvalue $-1$.  
Hence $U(F_{4})$ has no eigenvalues, and the first part in Theorem $\ref{Main3}$ in Section $\ref{INTRO}$ 
is proved. The second part of Theorem $\ref{Main3}$ follows from Corollary 1.4  in \cite{Ta1}. \hfill$\blacksquare$

\section{Quantum walks of Grover type}\label{GTYPE}

In this section, we consider the unitary time evolution $U(C)=U(S,\pcal,C)$ of the quantum walks of Grover type. 
Thus, the coin matrix $C$ is assumed to be a $D \times D$ unitary matrix satisfying $C^{2}=I$ and $C$ is 
not a scalar multiple of the identity matrix. In this case, $\sigma(C)=\{\pm 1\}$. 
Therefore, only $\pm 1$ can be eigenvalues of $U(C)$. 
Let $\ecal(\pm 1)$ be the 
eigenspace of $C$ with eigenvalue $\pm 1$. We note that since $C$ is not a 
scalar multiple of the identity, $\ecal(\pm 1) \neq \{0\}$. 
Let $\pi_{\pm}$ be the orthogonal projection onto $\ecal(\pm 1)$. 
We define a subset $E$ in the $d$-dimensional torus $T^{d}$ by 
\[
E=\bigcup_{\alpha \in S}E_{\alpha},\quad E_{\alpha}=\{z \in T^{d} \mid z^{\alpha}=-1\}. 
\]
We emphasize that we do not assume that $S$ does not contain the origin. 
However, we have $E_{0}=\emptyset$ in the definition of $E$ and hence 
$E$ is still a finite union of closed submanifolds in $T^{d}$. 
We set
\begin{equation}\label{FUTOG}
\eta(z) = \sum_{\alpha \in S}\frac{1-z^{\alpha}}{1+z^{\alpha}} P_{\alpha} \quad (z \in T^{d} \setminus E).
\end{equation}
\begin{thm}\label{Gro1}
Suppose that $U(C)$ is a quantum walk of Grover type. 
Then the following two statements are equivalent. 
\begin{enumerate}
\item $U(C)$ has eigenvalue $1$. 
\item For each $z \in T^{d} \setminus E$, there exists a non-zero vector $\psi \in \ecal(1)$ 
such that $\eta(z)\psi \in \ecal(-1)$. 
\end{enumerate}
The following two statements are also equivalent. 
\begin{enumerate}
\item[(3)] $U(C)$ has eigenvalue $-1$. 
\item[(4)] For each $z \in T^{d} \setminus E$, there exists a non-zero vector $\psi \in \ecal(-1)$ 
such that $\eta(z)\psi \in \ecal(1)$. 
\end{enumerate}
\end{thm}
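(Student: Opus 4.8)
The plan is to argue exactly as in the proof of Proposition~\ref{gen1}, specialized to the two values $\omega=\pm 1$ that a coin matrix of Grover type can have, and invoking Lemma~\ref{twist1} to pass between the eigenvalue problem for $U(C)$ on $\ell^{2}(\mb{Z}^{d},\mb{C}^{D})$ and the eigenvalue problem for the fibre matrix $\wh{C}(z)=V(z)C$ on $\mb{C}^{D}$ for each $z\in T^{d}$. The one structural difference from Proposition~\ref{gen1} is that the origin is now allowed to lie in $S$; this causes no trouble because $z^{0}=1\neq-1$, so $E_{0}=\emptyset$, $E$ is still a proper closed subset which is a finite union of submanifolds, $T^{d}\setminus E$ is open and dense, and the term $\alpha=0$ contributes the coefficient $(1-z^{0})/(1+z^{0})=0$ and hence is automatically absent from $\eta(z)$ in $\eqref{FUTOG}$. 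I will carry out $(1)\Leftrightarrow(2)$ in full and then observe that $(3)\Leftrightarrow(4)$ is the same computation with the roles of $\pi_{+}$ and $\pi_{-}$ exchanged.

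For $(1)\Rightarrow(2)$: by Lemma~\ref{twist1}, fixing any $z\in T^{d}\setminus E$ there is a nonzero $\phi\in\mb{C}^{D}$ with $\wh{C}(z)\phi=\phi$. Writing $\phi=\pi_{+}\phi+\pi_{-}\phi$ and using $C\pi_{\pm}=\pm\pi_{\pm}$ together with $\sum_{\alpha\in S}P_{\alpha}=I$ from $\eqref{resU1}$, the identity $\wh{C}(z)\phi-\phi=0$ becomes $\sum_{\alpha\in S}P_{\alpha}\bigl[(z^{\alpha}-1)\pi_{+}\phi-(z^{\alpha}+1)\pi_{-}\phi\bigr]=0$. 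Applying $P_{\alpha}$ and using $z^{\alpha}\neq-1$ gives $P_{\alpha}\pi_{-}\phi=-\tfrac{1-z^{\alpha}}{1+z^{\alpha}}P_{\alpha}\pi_{+}\phi$ for every $\alpha\in S$; summing over $\alpha$ yields $\pi_{-}\phi=-\eta(z)\pi_{+}\phi$. If $\pi_{+}\phi=0$ then $\pi_{-}\phi=0$ and $\phi=0$, a contradiction, so $\psi:=\pi_{+}\phi\in\ecal(1)$ is nonzero and $\eta(z)\psi=-\pi_{-}\phi\in\ecal(-1)$, which is $(2)$.

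For $(2)\Rightarrow(1)$: given $z\in T^{d}\setminus E$ and a nonzero $\psi\in\ecal(1)$ with $\eta(z)\psi\in\ecal(-1)$, put $\phi=\psi-\eta(z)\psi$. Then $\pi_{+}\phi=\psi\neq 0$ (so $\phi\neq 0$), $\pi_{-}\phi=-\eta(z)\psi$, and $C\phi=\psi+\eta(z)\psi$. Using the orthogonality relations $\eqref{resU1}$ one has $P_{\alpha}\eta(z)\psi=\tfrac{1-z^{\alpha}}{1+z^{\alpha}}P_{\alpha}\psi$, and a short computation then shows $z^{\alpha}P_{\alpha}C\phi=P_{\alpha}\phi$ for every $\alpha\in S$ (both sides equal $\tfrac{2z^{\alpha}}{1+z^{\alpha}}P_{\alpha}\psi$), whence $\wh{C}(z)\phi=\phi$. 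Thus $1\in\sigma(\wh{C}(z))$ for all $z\in T^{d}\setminus E$; since the characteristic polynomial of $\wh{C}(z)$ is continuous in $z$ and $T^{d}\setminus E$ is dense in $T^{d}$, in fact $1\in\sigma(\wh{C}(z))$ for all $z\in T^{d}$, and Lemma~\ref{twist1} gives $(1)$. For $(3)\Leftrightarrow(4)$ one repeats the argument with $\wh{C}(z)\phi=-\phi$: the relation obtained by applying $P_{\alpha}$ is now $(z^{\alpha}+1)P_{\alpha}\pi_{+}\phi=(z^{\alpha}-1)P_{\alpha}\pi_{-}\phi$, which on $T^{d}\setminus E$ gives $\pi_{+}\phi=-\eta(z)\pi_{-}\phi$, so one takes $\psi=\pi_{-}\phi\in\ecal(-1)$.

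I do not anticipate a genuine obstacle here: the proof is a routine specialization of Proposition~\ref{gen1}. The only points requiring a little care are the bookkeeping with the $\alpha=0$ term (harmless, as explained above, since its $\eta$-coefficient vanishes and $E_{0}=\emptyset$) and verifying that the two equivalences are truly symmetric under $\pi_{+}\leftrightarrow\pi_{-}$ — which they are, because on $T^{d}\setminus E$ both $\tfrac{1-z^{\alpha}}{1+z^{\alpha}}$ and its reciprocal are well defined.
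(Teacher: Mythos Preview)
Your proof is correct and follows essentially the same route as the paper's: decompose an eigenvector of $\wh{C}(z)$ along $\ecal(\pm 1)$, extract the relation $\pi_{-}\phi=-\eta(z)\pi_{+}\phi$ (resp.\ $\pi_{+}\phi=-\eta(z)\pi_{-}\phi$) from the projected equations, and for the converse set $\phi=\psi-\eta(z)\psi$ and verify $\wh{C}(z)\phi=\pm\phi$ directly, then pass to all of $T^{d}$ by continuity of the characteristic polynomial and density of $T^{d}\setminus E$. One minor remark: your closing comment about the reciprocal of $(1-z^{\alpha})/(1+z^{\alpha})$ being well defined on $T^{d}\setminus E$ is unnecessary and in fact incorrect (it fails whenever $z^{\alpha}=1$, in particular for $\alpha=0$), but this does no harm since your actual $(3)\Leftrightarrow(4)$ argument correctly uses the \emph{same} $\eta(z)$, not its inverse.
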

Theorem $\ref{Gro1}$ is very similar to Proposition $\ref{gen1}$ at first grance. 
However, there is a difference. Indeed, in Theorem $\ref{Gro1}$, 
the equation similar to $\eqref{eigenE1}$ will be solved for $\pi_{\omega}^{\perp}$, 
and this enables us to handle a ``lazy term'' $P_{0}$ when the origin $0$ is contained in the set of steps. 
\begin{proof}
Since the proof of the equivalence of the conditions $(3)$ and $(4)$ is the same 
as that of the equivalence of $(1)$ and $(2)$, we only give the proof of the latter. 
First suppose that the condition $(1)$ holds. Then, for any $z \in T^{d}$, $\wh{C}(z)$ has eigenvalue $1$. 
We fix $z \in T^{d} \setminus E$. We take an eigenvector $\phi$ of $\wh{C}(z)$ with eigenvalue $1$. 
Since $\phi=\pi_{+}\phi + \pi_{-}\phi$, we have 
\[
\begin{split}
0  = \wh{C}(z)\phi-\phi & =\sum_{\alpha \in S} z^{\alpha} P_{\alpha}\pi_{+}\phi - \sum_{\alpha \in S} z^{\alpha} P_{\alpha} \pi_{-}\phi 
-\sum_{\alpha \in S}P_{\alpha} \pi_{+}\phi -\sum_{\alpha \in S} P_{\alpha} \pi_{-} \phi \\
& = -\sum_{\alpha \in S}
P_{\alpha}\Big[
(1-z^{\alpha}) \pi_{+}\phi +(1+z^{\alpha}) \pi_{-}\phi
\Big]. 
\end{split}
\]
Applying $P_{\alpha}$ to the above equation, we have 
\[
P_{\alpha}\pi_{-}\phi=-\frac{1-z^{\alpha}}{1+z^{\alpha}}P_{\alpha}\pi_{+}\phi. 
\]
Summing the above over all $\alpha \in S$ then gives us 
\[
\pi_{-}\phi=-\eta(z) \pi_{+}\phi. 
\]
From this equation, we see that $\pi_{+}\phi \neq 0$, $\pi_{+}\phi \in \ecal(1)$ and $\eta(z)\pi_{+}\phi \in \ecal(-1)$, showing (2). 
Conversely, suppose that the condition $(2)$ holds. We take $\psi \in \ecal(1)$ such that $\psi \neq 0$ and $\eta(z)\psi \in \ecal(-1)$. 
We set $\phi(z)=\psi -\eta(z)\psi$ for each $z \in T^{d} \setminus E$. Then, for any $z \in T^{d} \setminus E$, we see 
\[
\wh{C}(z)\phi(z)=\sum_{\alpha \in S} z^{\alpha}P_{\alpha}(\psi+\eta(z)\psi)
=\sum_{\alpha \in S} \frac{2z^{\alpha}}{1+z^{\alpha}} P_{\alpha}\psi. 
\]
We have 
\[
1-\frac{1-z^{\alpha}}{1+z^{\alpha}}=\frac{2z^{\alpha}}{1+z^{\alpha}}.  
\]
Therefore we have $\wh{C}(z)\phi(z)=\phi(z)$. Thus $\wh{C}(z)$ has eigenvalue $1$ for each $z \in T^{d} \setminus E$. 
The characteristic polynomial of $\wh{C}(z)$ is a continuous function on $T^{d}$ and $T^{d} \setminus E$ is dense in $T^{d}$. 
Hence $\wh{C}(z)$ has eigenvalue $1$ for any $z \in T^{d}$, which shows the statement $(1)$. 
\end{proof}
The necessary and sufficient condition for $U(C)$ to have an eigenvalue given in Theorem $\ref{Gro1}$ 
is strong enough. However the condition is a property of the operator $\eta(z)$ {\it for each} $z \in T^{d} \setminus E$ 
and it would not be so easy to check it. 
But it can be used to give the following much more effective sufficient condition under some more conditions. 
\begin{thm}\label{GroT}
Let $S \subset \mb{Z}^{d}$ be a finite set symmetric about the origin, 
that is, for any $\alpha$ in $S$, $-\alpha$ is also in $S$. 
Let $\{P_{\alpha}\}_{\alpha \in S}$ be a resolution of unity parametrized by $S$. 
If $\|P_{\alpha}\phi\|_{\mb{C}^{D}}=\|P_{-\alpha}\phi\|_{\mb{C}^{D}}$ holds for any $\phi \in \ecal(1)$ and $\alpha \in S$,  
then $U(C)$ has the eigenvalue $1$. 
Similarly, if $\|P_{\alpha}\phi\|_{\mb{C}^{D}}=\|P_{-\alpha}\phi\|_{\mb{C}^{D}}$ holds for any $\phi \in \ecal(-1)$ and 
$\alpha \in S$, then $U(C)$ has the eigenvalue $-1$. 
\end{thm}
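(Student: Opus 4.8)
The plan is to apply the necessary and sufficient condition from Theorem~\ref{Gro1}. To show $U(C)$ has eigenvalue $1$, I must produce, for each fixed $z \in T^{d} \setminus E$, a non-zero vector $\psi \in \ecal(1)$ such that $\eta(z)\psi \in \ecal(-1)$, where $\eta(z) = \sum_{\alpha \in S}\frac{1-z^{\alpha}}{1+z^{\alpha}}P_{\alpha}$. The symmetry hypothesis on $S$ and the norm condition $\|P_{\alpha}\phi\| = \|P_{-\alpha}\phi\|$ for $\phi \in \ecal(1)$ strongly suggest that one should not try to solve $\eta(z)\psi \in \ecal(-1)$ for a single $\psi$, but rather exhibit the eigenfunction directly. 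I would instead go back one step and construct a non-zero $g \in L^{2}(T^{d},\mb{C}^{D})$ with $\wh{C}(z)g(z) = g(z)$ for a.e.\ $z$, using the explicit candidate suggested by the proof of Theorem~\ref{Gro1}: for $\psi \in \ecal(1)$ one sets $\phi(z) = \psi - \eta(z)\psi$, and this is automatically fixed by $\wh{C}(z)$ \emph{provided} $\eta(z)\psi \in \ecal(-1)$. So the real task is to pick one fixed $\psi \in \ecal(1)$, $\psi \neq 0$, for which $\eta(z)\psi \in \ecal(-1)$ holds simultaneously for all $z$.

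The key computation is this: $\eta(z)\psi \in \ecal(-1)$ means $\pi_{+}\eta(z)\psi = 0$, i.e.\ $\sum_{\alpha \in S}\frac{1-z^{\alpha}}{1+z^{\alpha}}\pi_{+}P_{\alpha}\psi = 0$ for all $z \in T^{d}\setminus E$. Writing $c_{\alpha}(z) = \frac{1-z^{\alpha}}{1+z^{\alpha}}$, note that $c_{-\alpha}(z) = \frac{1-z^{-\alpha}}{1+z^{-\alpha}} = \frac{z^{\alpha}-1}{z^{\alpha}+1} = -c_{\alpha}(z)$. So pairing $\alpha$ with $-\alpha$, the sum becomes $\sum_{\{\alpha,-\alpha\}} c_{\alpha}(z)\big(\pi_{+}P_{\alpha}\psi - \pi_{+}P_{-\alpha}\psi\big)$ (with the convention that a fixed point $\alpha = -\alpha$, which forces $c_{\alpha}(z) = 0$ since $z^{\alpha} = 1$ there is impossible unless $\alpha = 0$, and $c_{0} \equiv 0$). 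Hence it suffices to find $\psi \in \ecal(1)\setminus\{0\}$ with $\pi_{+}P_{\alpha}\psi = \pi_{+}P_{-\alpha}\psi$ for every $\alpha \in S$. The plan is to show that the norm hypothesis forces exactly this, after choosing $\psi$ cleverly; the anti-symmetry of the coefficients is what makes a single $\psi$ work for all $z$ at once.

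The main obstacle is passing from the scalar norm equality $\|P_{\alpha}\psi\| = \|P_{-\alpha}\psi\|$ to the vector equality $\pi_{+}P_{\alpha}\psi = \pi_{+}P_{-\alpha}\psi$, which is false for a general $\psi \in \ecal(1)$. The resolution I would pursue is to build the eigenfunction not from a single $\psi$ but to exploit $\ecal(1)$ as a whole: consider the operator $A = \sum_{\alpha \in S}\pi_{+}P_{\alpha}\pi_{+}$ restricted to $\ecal(1)$; since $\sum_{\alpha}P_{\alpha} = I$ this operator is the identity on $\ecal(1)$, so $\sum_{\alpha}\langle P_{\alpha}\psi,\psi\rangle = \|\psi\|^{2}$ with each term $\|P_{\alpha}\psi\|^{2} \geq 0$. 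The norm symmetry then says the ``mass function'' $\alpha \mapsto \|P_{\alpha}\psi\|^{2}$ on $S$ is even. I expect one then needs a short argument — likely a dimension count or a direct linear-algebra argument on $\ecal(1)$, possibly choosing $\psi$ to lie in a suitable joint eigenspace or using an averaging/symmetrization over the involution $\alpha \mapsto -\alpha$ — to upgrade this to the needed identity, or alternatively to reduce the problem fibrewise and invoke Theorem~\ref{Gro1} with the condition verified pointwise in $z$ by a continuity/density argument. The $-1$ case is identical with $\ecal(1)$ replaced by $\ecal(-1)$ throughout and the roles of $\ecal(\pm1)$ in Theorem~\ref{Gro1}(4) swapped.
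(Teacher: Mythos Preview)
Your setup is correct up through the antisymmetry $c_{-\alpha}(z)=-c_{\alpha}(z)$, but the proof then stalls at exactly the point that matters. You try to upgrade the norm equality $\|P_{\alpha}\psi\|=\|P_{-\alpha}\psi\|$ to the vector equality $\pi_{+}P_{\alpha}\psi=\pi_{+}P_{-\alpha}\psi$, correctly note that this is false in general, and then speculate about dimension counts or symmetrization without resolving anything. That is a genuine gap: none of the suggested fixes actually work, and the argument as written does not close.

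The missing idea is polarization. You do not need $\eta(z)\psi$ to vanish term-by-term; you only need $\eta(z)\psi\in\ecal(1)^{\perp}=\ecal(-1)$, i.e.\ $\ispa{\eta(z)\psi,\phi}=0$ for all $\phi\in\ecal(1)$. The paper observes that because each $P_{\alpha}$ is an orthogonal projection, the \emph{diagonal} of this sesquilinear form involves only the norms:
\[
\ispa{\eta(z)\psi,\psi}
=\sum_{\alpha\in S_{o}}\frac{1-z^{\alpha}}{1+z^{\alpha}}
\bigl(\|P_{\alpha}\psi\|^{2}-\|P_{-\alpha}\psi\|^{2}\bigr),
\]
using your antisymmetry to pair $\alpha$ with $-\alpha$. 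The hypothesis kills this for every $\psi\in\ecal(1)$. Then the complex polarization identity gives $\ispa{\eta(z)\psi,\phi}=0$ for all $\psi,\phi\in\ecal(1)$, so $\eta(z)\ecal(1)\subset\ecal(-1)$ for every $z\in T^{d}\setminus E$. Now \emph{any} nonzero $\psi\in\ecal(1)$ satisfies condition~(2) of Theorem~\ref{Gro1}, and you are done. The step you were looking for is one line once you test against $\psi$ itself rather than against each $P_{\alpha}$ separately.
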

\begin{proof}
Let $S_{o}$ be a subset of $S$ such that $S=S_{o} \sqcup (-S_{o}) \sqcup \{0\}$. 
Since the coefficient of $P_{0}$ in the definition $\eqref{FUTOG}$ of $\eta(z)$ is zero, we see 
\[
\eta(z)=\sum_{\alpha \in S_{o}} \frac{1-z^{\alpha}}{1+z^{\alpha}}P_{\alpha}+
\sum_{\alpha \in -S_{o}} \frac{1-z^{\alpha}}{1+z^{\alpha}}P_{\alpha}
=\sum_{\alpha \in S_{o}} \frac{1-z^{\alpha}}{1+z^{\alpha}}(P_{\alpha}-P_{-\alpha}).
\]
From this we obtain 
\begin{equation}\label{TT1}
\ispa{\eta(z)\phi,\phi}_{\mb{C}^{D}}=
\sum_{\alpha \in S_{o}}\frac{1-z^{\alpha}}{1+z^{\alpha}} 
\left(
\|P_{\alpha}\phi\|_{\mb{C}^{D}}^{2} -\|P_{-\alpha}\phi\|_{\mb{C}^{D}}^{2}
\right). 
\end{equation}
We note that the equation $\eqref{TT1}$ holds for any $\phi \in \mb{C}^{D}$ if $S$ is symmetric about the origin. 
If we have $\|P_{\alpha}\phi\|_{\mb{C}^{D}}=\|P_{-\alpha}\phi\|_{\mb{C}^{D}}$ for any $\alpha \in S$ and $\phi \in \ecal(1)$, 
we see that $\ispa{\eta(z)\phi,\phi}_{\mb{C}^{D}}=0$ for $\phi \in \ecal(1)$. By a polarization identity, we have 
$\ispa{\eta(z)\phi,\psi}_{\mb{C}^{D}}=0$ for any $\phi,\psi \in \ecal(1)$. 
This shows that $\eta(z) \ecal(1) \subset \ecal(-1)=\ecal(1)^{\perp}$. 
Therefore Theorem $\ref{Gro1}$ shows the first part of Theorem $\ref{GroT}$. 
The second part follows similarly from $\eqref{TT1}$ and Theorem $\ref{Gro1}$. 
\end{proof}

When $U(S,\pcal,C)$ is of reflection type, 
the coin matrix $C$ has the form
\begin{equation}\label{RL1}
C\phi=C_{\mu}\phi=2\ispa{\phi,\mu}_{\mb{C}^{D}}\mu-\phi \quad (\phi \in \mb{C}^{D}), 
\end{equation}
where $\mu$ is a fixed unit vector in $\mb{C}^{D}$ which spans 
the eigenspace of $C_{\mu}$ corresponding to the eigenvalue $1$. 
In this case we have a precise description of the eigenspace corresponding 
to the eigenvalue $1$ of $U(S,\pcal,C_{\mu})$. 
\begin{thm}\label{EGS1}
Let us suppose that $S$ is symmetric about the origin and that 
$\|P_{\alpha}\mu\|_{\mb{C}^{D}}=\|P_{-\alpha}\mu\|_{\mb{C}^{D}}$ holds for any $\alpha \in S$. 
Then the eigenspace of $U(S,\pcal,C_{\mu})$ corresponding to the eigenvalue $1$ is given by 
\[
\ell^{2}(\mb{Z}^{d}) \ast w=\{f \ast w \mid f \in \ell^{2}(\mb{Z}^{d})\}, 
\]
where $\ell^{2}(\mb{Z}^{d})$ denotes the $\ell^{2}$-space of $\mb{C}$-valued functions on $\mb{Z}^{d}$,  
the function $w$ is the inverse Fourier transform of $w_{o} \in L^{2}(T^{d},\mb{C}^{D})$ given by 
\[
w_{o}(z)=\frac{\mu -\eta(z)\mu}{\|\mu-\eta(z)\mu\|_{\mb{C}^{D}}}
=\left(
\sum_{\alpha \in S} \frac{\|P_{\alpha}\mu\|_{\mb{C}^{D}}^{2}}{|1+z^{\alpha}|^{2}}
\right)^{-1/2}
\sum_{\alpha \in S}\frac{z^{\alpha}}{1+z^{\alpha}}P_{\alpha}\mu \quad (z \in T^{d} \setminus E), 
\]
and the convolution $f \ast w$ of $f \in \ell^{2}(\mb{Z}^{d})$ and $w \in \ell^{2}(\mb{Z}^{d},\mb{C}^{D})$ is 
given by 
\[
(f \ast w)(x)=\sum_{y \in \mb{Z}^{d}}f(x-y)w(y). 
\]
\end{thm}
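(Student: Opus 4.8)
The plan is to use Lemma~\ref{twist1} together with Theorem~\ref{Gro1} and its proof: the eigenspace of $U(S,\pcal,C_{\mu})$ with eigenvalue $1$ is the image under $\fcal^{-1}$ of the space of $L^{2}$-sections $z\mapsto \phi(z)$ with $\wh{C}(z)\phi(z)=\phi(z)$ for a.e.\ $z$. So first I would identify, for each fixed $z\in T^{d}\setminus E$, the $1$-eigenspace of $\wh{C}(z)$. Since $C_{\mu}$ is of reflection type, $\ecal(1)=\mb{C}\mu$ is one-dimensional, so the hypothesis $\|P_{\alpha}\mu\|=\|P_{-\alpha}\mu\|$ and the computation \eqref{TT1} in the proof of Theorem~\ref{GroT} show $\eta(z)\mu\in\ecal(1)^{\perp}=\ecal(-1)$; hence, exactly as in the ``conversely'' part of the proof of Theorem~\ref{Gro1}, the vector $\mu-\eta(z)\mu$ is a nonzero $1$-eigenvector of $\wh{C}(z)$. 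The key point to establish is that this eigenspace is \emph{exactly} one-dimensional for each $z\in T^{d}\setminus E$: from the relation displayed in the proof of Theorem~\ref{Gro1}, any $1$-eigenvector $\phi$ of $\wh{C}(z)$ satisfies $\pi_{-}\phi=-\eta(z)\pi_{+}\phi$, so $\phi$ is determined by $\pi_{+}\phi\in\ecal(1)=\mb{C}\mu$; therefore $\phi\in\mb{C}(\mu-\eta(z)\mu)$, giving exactly dimension one.

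Next I would normalize: set $w_{o}(z)=(\mu-\eta(z)\mu)/\|\mu-\eta(z)\mu\|_{\mb{C}^{D}}$, and verify the closed-form expression. Using \eqref{FUTOG} one has $\mu-\eta(z)\mu=\sum_{\alpha\in S}\bigl(1-\tfrac{1-z^{\alpha}}{1+z^{\alpha}}\bigr)P_{\alpha}\mu=\sum_{\alpha\in S}\tfrac{2z^{\alpha}}{1+z^{\alpha}}P_{\alpha}\mu$, which up to the scalar $2$ is $\sum_{\alpha}\tfrac{z^{\alpha}}{1+z^{\alpha}}P_{\alpha}\mu$; and since the $P_{\alpha}\mu$ are mutually orthogonal with $\sum\|P_{\alpha}\mu\|^{2}=1$, the norm is $2\bigl(\sum_{\alpha}\tfrac{|z^{\alpha}|^{2}}{|1+z^{\alpha}|^{2}}\|P_{\alpha}\mu\|^{2}\bigr)^{1/2}=2\bigl(\sum_{\alpha}\tfrac{\|P_{\alpha}\mu\|^{2}}{|1+z^{\alpha}|^{2}}\bigr)^{1/2}$ (using $|z^{\alpha}|=1$). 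The factors of $2$ cancel, yielding the stated formula. One must check $w_{o}\in L^{\infty}(T^{d},\mb{C}^{D})$, hence in $L^{2}$: near $E$ the denominators $|1+z^{\alpha}|$ vanish, but they appear symmetrically in numerator (after cancellation each term is $\tfrac{z^{\alpha}}{1+z^{\alpha}}P_{\alpha}\mu$, of size $|1+z^{\alpha}|^{-1}$) and in the normalizing factor (of size $(\sum|1+z^{\alpha}|^{-2})^{-1/2}$), so $w_{o}$ is bounded; this is the one genuinely analytic point to be careful about.

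Finally I would translate the ``section'' description back to $\ell^{2}(\mb{Z}^{d},\mb{C}^{D})$. A function $g\in L^{2}(T^{d},\mb{C}^{D})$ satisfies $\wh{C}(z)g(z)=g(z)$ a.e.\ iff $g(z)\in\mb{C}\,w_{o}(z)$ a.e., i.e.\ $g(z)=h(z)w_{o}(z)$ for some scalar $h\in L^{2}(T^{d})$ (measurability of $h$ follows since $w_{o}$ is a measurable nowhere-zero section and $h(z)=\ispa{g(z),w_{o}(z)}$; square-integrability of $h$ from $\|w_{o}(z)\|=1$). Under $\fcal^{-1}$, pointwise multiplication by $h$ becomes convolution by $f=\fcal^{-1}h\in\ell^{2}(\mb{Z}^{d})$: $\fcal^{-1}(h\cdot w_{o})=f\ast w$ with $w=\fcal^{-1}w_{o}$, since $(\fcal^{-1}(h w_{o}))(x)=\int z^{-x}h(z)w_{o}(z)\,d\nu(z)$ and expanding $h(z)=\sum_{y}f(y)z^{y}$ gives $\sum_{y}f(y)\int z^{-(x-y)}w_{o}(z)\,d\nu(z)=\sum_{y}f(y)w(x-y)=(f\ast w)(x)$. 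As $h$ ranges over $L^{2}(T^{d})$, $f$ ranges over $\ell^{2}(\mb{Z}^{d})$, so the eigenspace is $\ell^{2}(\mb{Z}^{d})\ast w$. The main obstacle is the uniform-dimension-one claim for $\wh{C}(z)$ on $T^{d}\setminus E$ together with the boundedness of $w_{o}$ across $E$; both are handled by the orthogonality of $\{P_{\alpha}\mu\}$ and the explicit cancellation above, but they are what makes the clean description possible and must be stated carefully.
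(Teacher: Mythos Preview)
Your proposal is correct and follows essentially the same approach as the paper's own proof: work on the Fourier side, use the relation $\pi_{-}\phi=-\eta(z)\pi_{+}\phi$ from the proof of Theorem~\ref{Gro1} together with $\ecal(1)=\mb{C}\mu$ to see that every $1$-eigenvector of $\wh{C}(z)$ lies in $\mb{C}(\mu-\eta(z)\mu)$, deduce that the eigenspace of $\ucal$ is $L^{2}(T^{d})w_{o}$, and then pull back by $\fcal^{-1}$ to the convolution description. Your write-up is somewhat more explicit than the paper's (you spell out the dimension-one claim, the verification of the closed form for $w_{o}$, and the boundedness of $w_{o}$ across $E$), but the logical skeleton is the same.
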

\begin{proof}
Let $\ucal=\ucal(S,\pcal,C_{\mu})$ be a unitary operator given in $\eqref{FOP1}$, $\eqref{FOP2}$. 
Under the assumption in the statement, $1$ is an eigenvalue of $\ucal$. 
Let $k \in L^{2}(T^{d},\mb{C}^{D})$ be an eigenfunction of $\ucal$ with eigenvalue $1$. 
Then $\wh{C_{\mu}}(z)k(z)=k(z)$ for all $z \in T^{d}$. 
We express $k(z)$ as 
\[
k(z)=g(z)\mu +\pi_{-}k(z),\quad g(z)=\ispa{k(z),\mu}_{\mb{C}^{D}}.  
\]
Both of $g(z)$ and $\pi_{-}k(z)$ are $L^{2}$-functions. 
As in the proof of Theorem $\ref{Gro1}$, we see $\pi_{-}k(z)=-g(z)\eta(z)\mu$, and hence 
\[
k(z)=g(z)(\mu -\eta(z)\mu)=g(z)\|\mu-\eta(z)\mu\|_{\mb{C}^{D}} w_{o}(z). 
\]
Since $\|k(z)\|_{\mb{C}^{D}}^{2}=|g(z)|^{2}\|\mu-\eta(z)\mu\|_{\mb{C}^{D}}^{2}$, 
the function $f(z)=g(z)\|\mu-\eta(z)\mu\|_{\mb{C}^{D}}$ is an $L^{2}$-function on $T^{d}$. 
Thus, $k \in L^{2}(T^{d}) w_{o}$. Conversely, since $\|w_{o}(z)\|_{\mb{C}^{D}}=1$ for any $z \in T^{d}$, 
we have $fw_{o} \in L^{2}(T^{d},\mb{C}^{D})$ for 
any $f \in L^{2}(T^{d})$, and it is straightforward to see that $fw_{o}$ is 
an eigenfunction of $\ucal$ with eigenvalue $1$. Hence $L^{2}(T^{d})w_{o}$ 
is the eigenspace of $\ucal$ corresponding to the eigenvalue $1$. 
It is easy to show that $\ell^{2}(\mb{Z}^{d}) \ast w \subset \ell^{2}(\mb{Z}^{d},\mb{C}^{D})$ and 
\[
\fcal (\ell^{2}(\mb{Z}^{d}) \ast w)=L^{2}(T^{d}) w_{o}. 
\] 
This shows that $\ell^{2}(\mb{Z}^{d}) \ast w$ is the eigenspace of $U(S,\pcal,C)$ 
with the eigenvalue $1$. 
\end{proof}

We have the following criterion for quantum walks of reflection type not to have eigenvalue $-1$. 
\begin{thm}\label{EVN1}
Let $S \subset \mb{Z}^{d}$ be a set of steps containing the origin, 
and let $\pcal=\{P_{\alpha}\}_{\alpha \in S}$ be a resolution of unity on $\mb{C}^{D}$ such that ${\rm rank}\, P_{0}=1$. 
Let $\mu \in \mb{C}^{D}$ be a unit vector and let $U(C_{\mu})=U(S,\pcal,C_{\mu})$ be the quantum walk of reflection type 
with coin matrix $C_{\mu}$ given by $\eqref{RL1}$. 
If $P_{0}\mu \neq 0$, then $U(C_{\mu})$ does not have eigenvalue $-1$. 
\end{thm}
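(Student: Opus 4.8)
The plan is to apply the criterion of Theorem~\ref{Gro1} for the eigenvalue $-1$ --- the equivalence of statements (3) and (4) there. For a coin matrix of reflection type, $\ecal(1)=\mb{C}\mu$ and $\ecal(-1)=\mu^{\perp}$, so that equivalence reads: $U(C_{\mu})$ has eigenvalue $-1$ if and only if for \emph{every} $z\in T^{d}\setminus E$ there is a nonzero $\psi\in\mu^{\perp}$ with $\eta(z)\psi\in\mb{C}\mu$. I would show that for all $z$ in a nonempty (indeed dense) subset of $T^{d}\setminus E$ no such $\psi$ exists, which makes (4) fail and hence rules out the eigenvalue $-1$.

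First I would exploit the hypothesis $0\in S$: since $z^{0}=1$, the coefficient of $P_{0}$ in $\eqref{FUTOG}$ is $\frac{1-1}{1+1}=0$, so $\eta(z)=\sum_{\alpha\in S\setminus\{0\}}\frac{1-z^{\alpha}}{1+z^{\alpha}}P_{\alpha}$ and, because $P_{0}P_{\alpha}=0$ for $\alpha\neq 0$, $P_{0}\eta(z)=0$ for every $z\in T^{d}\setminus E$. Consequently, if $\psi\in\mu^{\perp}$ and $\eta(z)\psi=c\mu$ for some scalar $c$, then applying $P_{0}$ gives $0=cP_{0}\mu$; since $P_{0}\mu\neq 0$, this forces $c=0$, i.e.\ $\eta(z)\psi=0$.

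The remaining step is to deduce $\psi=0$ from $\eta(z)\psi=0$ for suitable $z$. I would introduce $L'=\bigcup_{\alpha\in S\setminus\{0\}}\{z\in T^{d}\mid z^{\alpha}=1\}$, which like $E$ is a finite union of proper closed submanifolds of $T^{d}$, so $T^{d}\setminus(E\cup L')$ is open and dense. Fixing $z$ there, every coefficient $\frac{1-z^{\alpha}}{1+z^{\alpha}}$ with $\alpha\in S\setminus\{0\}$ is finite and nonzero, and since the ranges of the $P_{\alpha}$ are mutually orthogonal, $\eta(z)\psi=0$ forces $P_{\alpha}\psi=0$ for all $\alpha\in S\setminus\{0\}$; then $\sum_{\alpha\in S}P_{\alpha}=I$ gives $\psi=P_{0}\psi\in{\rm ran}\,P_{0}$. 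Now ${\rm rank}\,P_{0}=1$ and $P_{0}\mu\neq 0$ give ${\rm ran}\,P_{0}=\mb{C}\,P_{0}\mu$, so $\psi=tP_{0}\mu$ for some $t\in\mb{C}$; but $\psi\perp\mu$ then yields $0=\langle\psi,\mu\rangle=t\|P_{0}\mu\|^{2}$, and $\|P_{0}\mu\|^{2}>0$ forces $t=0$, hence $\psi=0$. This contradicts $\psi\neq 0$, so (4) fails and $U(C_{\mu})$ has no eigenvalue $-1$.

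I do not expect a genuine obstacle; the delicate point is the bookkeeping around the origin. One must \emph{not} reuse the set $L$ of Section~\ref{GEN} (which becomes all of $T^{d}$ once $0\in S$), but rather the modified set $L'$ built from $S\setminus\{0\}$; and the hypothesis $0\in S$ is used precisely to annihilate the $P_{0}$-term of $\eta(z)$, while ${\rm rank}\,P_{0}=1$ together with $P_{0}\mu\neq 0$ is exactly what forces a putative eigenvector $\psi$ --- which necessarily lies in ${\rm ran}\,P_{0}$ --- to be a nonzero multiple of $P_{0}\mu$, contradicting $\psi\perp\mu$.
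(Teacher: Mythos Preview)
Your proposal is correct and follows essentially the same route as the paper's own proof: both invoke the equivalence (3)$\Leftrightarrow$(4) of Theorem~\ref{Gro1}, pick $z$ with $z^{\alpha}\neq\pm 1$ for all $\alpha\in S\setminus\{0\}$, use the vanishing of the $P_{0}$-term in $\eta(z)$ together with $P_{0}\mu\neq 0$ to force $\eta(z)\psi=0$, and then use ${\rm rank}\,P_{0}=1$ to conclude $\psi=0$. Your explicit introduction of $L'$ in place of $L$ is a nice clarification the paper leaves implicit.
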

\begin{proof}
Let us suppose that $P_{0} \mu \neq 0$ and $U(C_{\mu})$ has the eigenvalue $-1$. We take and fix $z \in T^{d}$ 
such that $z^{\alpha} \neq \pm 1$ for any $\alpha \in S$, $\alpha \neq 0$. 
Then, by Theorem $\ref{Gro1}$, there exists a non-zero vector $\psi \in \mb{C}^{D}$ 
and a constant $c(z) \in \mb{C}$ such that $\ispa{\psi,\mu}_{\mb{C}^{D}}=0$ and $\eta(z)\psi=c(z)\mu$. 
Since the coefficient of $P_{0}$ in $\eta(z)$ is zero, we have $c(z)P_{0}\mu=0$ and hence $c(z)=0$ because $P_{0}\mu \neq 0$. 
This shows that $\eta(z)\psi=0$. Since $z^{\alpha} \neq \pm 1$ for any $0 \neq \alpha \in S$, 
we conclude $P_{\alpha}\psi=0$ for any $\alpha \neq 0$. 
Thus, $\psi=P_{0}\psi$. By the assumption that ${\rm rank}\, P_{0}=1$, there exists a constant $c$ such 
that $\psi=cP_{0}\mu$. Then $0=\ispa{\psi,\mu}_{\mb{C}^{D}}=c\|P_{0}\mu\|^{2}_{\mb{C}^{D}}$, which shows that $c=0$ and 
hence $\psi=0$, a contradiction. 
\end{proof}

When $P_{0}\mu=0$, $U(C_{\mu})=U(S,\pcal,C_{\mu})$ could have eigenvalue $-1$ as shown in the following theorem. 
\begin{thm}\label{EVNN}
We consider a lazy quantum walk $U(C_{\mu})=U(S_{{\rm lazy}}, \pcal_{{\rm lazy}}, C_{\mu})$ where 
the unit vector $\mu=\,\!^{t}(a_{1},\ldots,a_{2d+1}) \in \mb{C}^{2d+1}$ satisfies $|a_{j}|=|a_{d+1+j}|$ 
for every $j=1,\ldots,d$. 
We suppose that $a_{d+1}=0$. Then, the unitary operator $U(C_{\mu})$ has both of the eigenvalue $\pm 1$. 
\end{thm}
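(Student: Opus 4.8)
The plan is to treat the two eigenvalues with different tools: Theorem~\ref{GroT} for the eigenvalue $+1$ and the necessary-and-sufficient criterion of Theorem~\ref{Gro1} for the eigenvalue $-1$. For the eigenvalue $1$, first I would note that $S_{{\rm lazy}}=S_{{\rm std}}\cup\{0\}$ is symmetric about the origin and that, $C_{\mu}$ being of reflection type, $\ecal(1)=\mb{C}\mu$; hence the norm-balance hypothesis of Theorem~\ref{GroT} on $\ecal(1)$ only has to be checked on the single vector $\mu$. From $\eqref{lazy1}$ one reads off $P_{u_{j}}\mu=a_{j}\pmb{e}_{j}$ and $P_{-u_{j}}\mu=a_{d+1+j}\pmb{e}_{d+1+j}$, so $\|P_{u_{j}}\mu\|_{\mb{C}^{2d+1}}=|a_{j}|=|a_{d+1+j}|=\|P_{-u_{j}}\mu\|_{\mb{C}^{2d+1}}$ by hypothesis, while the condition for $\alpha=0$ is vacuous. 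Thus Theorem~\ref{GroT} yields the eigenvalue $1$ (equivalently, Theorem~\ref{EGS1} applies and even describes the corresponding eigenspace); note that the hypothesis $a_{d+1}=0$ is not needed for this half.

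For the eigenvalue $-1$ I would instead use Theorem~\ref{Gro1} directly, since the norm-balance condition of Theorem~\ref{GroT} would now have to hold on all of the $2d$-dimensional space $\ecal(-1)=\ecal(1)^{\perp}$ and fails there in general. By Theorem~\ref{Gro1} it suffices to produce, for each $z\in T^{d}\setminus E$, a non-zero $\psi\in\ecal(-1)$ with $\eta(z)\psi\in\ecal(1)$, and the key point is that $\psi=\pmb{e}_{d+1}$ works for every $z$ at once. Indeed, $a_{d+1}=0$ forces $\pmb{e}_{d+1}$ to be orthogonal to $\mu$, so $C_{\mu}\pmb{e}_{d+1}=-\pmb{e}_{d+1}$ by $\eqref{RL1}$, i.e.\ $0\ne\pmb{e}_{d+1}\in\ecal(-1)$; and in $\eta(z)=\sum_{\alpha\in S_{{\rm lazy}}}\frac{1-z^{\alpha}}{1+z^{\alpha}}P_{\alpha}$ of $\eqref{FUTOG}$ the coefficient of $P_{0}=P_{d+1}$ vanishes, while each of $P_{u_{j}}=P_{j}$ and $P_{-u_{j}}=P_{d+1+j}$ ($1\le j\le d$) annihilates $\pmb{e}_{d+1}$, whence $\eta(z)\pmb{e}_{d+1}=0\in\ecal(1)$. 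So condition (4) of Theorem~\ref{Gro1} holds and $U(C_{\mu})$ has the eigenvalue $-1$; together with the first part this proves the theorem.

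I do not expect a genuine obstacle: the whole content is the choice of instrument. The one mild subtlety is that Theorem~\ref{GroT} disposes of $+1$ immediately but is too blunt for $-1$, so one must fall back on the sharper Theorem~\ref{Gro1} and exhibit the explicit witness $\pmb{e}_{d+1}$, using exactly the two structural features supplied by the hypotheses --- $a_{d+1}=0$, which places $\pmb{e}_{d+1}$ in $\ecal(-1)$, and the absence of a genuine lazy term in $\eta(z)$ (the coefficient of $P_{0}$ is $0$), which makes $\eta(z)\pmb{e}_{d+1}=0$ for every $z$.
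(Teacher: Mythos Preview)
Your proof is correct, and for the eigenvalue $-1$ it is markedly simpler than the paper's own argument. Both you and the paper dispatch the eigenvalue $+1$ identically via Theorem~\ref{GroT}. For the eigenvalue $-1$, however, the paper constructs a $z$-dependent witness $\psi=\psi(z)\in\ecal(-1)$ and splits into two cases according to whether $J(z)=\{j:z_{j}=1\}$ is empty: when $J(z)=\emptyset$ it takes $\psi_{j}=\tfrac{1+z_{j}}{1-z_{j}}a_{j}$, $\psi_{d+1+j}=-\tfrac{1+z_{j}}{1-z_{j}}a_{d+1+j}$, $\psi_{d+1}=0$, obtaining $\eta(z)\psi=\mu$; when $J(z)\neq\emptyset$ it builds $\psi$ supported on the indices in $J(z)$ together with $\psi_{d+1}=1$, obtaining $\eta(z)\psi=0$. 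You bypass all of this with the single uniform choice $\psi=\pmb{e}_{d+1}$, which lies in $\ecal(-1)$ precisely because $a_{d+1}=0$ and is annihilated by every $P_{\alpha}$ with $\alpha\neq 0$, while the $P_{0}$-coefficient of $\eta(z)$ vanishes identically. In fact your observation amounts to the direct computation $\wh{C_{\mu}}(z)\pmb{e}_{d+1}=V(z)C_{\mu}\pmb{e}_{d+1}=-V(z)\pmb{e}_{d+1}=-\pmb{e}_{d+1}$, so one does not even strictly need Theorem~\ref{Gro1} here. The paper's more elaborate construction has the minor advantage that, on the generic set $J(z)=\emptyset$, it produces an eigenvector $\psi-\eta(z)\psi$ whose $\ecal(1)$-component is exactly $-\mu$ rather than $0$, but for the bare existence statement your route is cleaner.
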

\begin{proof}
The assumption on $\mu$ ensures that $U(C_{\mu})$ has eigenvalue $1$ by Theorem $\ref{GroT}$. 
We use Theorem $\ref{Gro1}$ to prove the assertion. 
In the setting described in the statement, the subset $E$ of $T^{d}$ is given by 
\begin{equation}\label{exD}
E=\{z=(z_{1},\ldots,z_{d}) \in T^{d} \mid z_{j}=-1 \ \mbox{for some $j=1,\ldots,d$}\}. 
\end{equation}
We take $z \in T^{d} \setminus E$. We set $J(z)=\{j \mid j=1,\ldots,d,\,z_{j}=1\}$. 
When $J(z)=\emptyset$, we define $\psi=\,\!^{t}(\psi_{1},\ldots,\psi_{2d+1}) \in \mb{C}^{2d+1}$ by 
$\psi_{d+1}=0$ and 
\begin{equation}\label{psi1}
\psi_{j}=\frac{1+z_{j}}{1-z_{j}} a_{j},\quad 
\psi_{d+1+j}=-\frac{1+z_{j}}{1-z_{j}} a_{d+1+j} \quad (j=1,\ldots,d). 
\end{equation}
Since $\mu$ is a unit vector, some of $a_{j}$ ($j=1,\ldots,d$) do not vanish. Hence $\psi \neq 0$. 
From $\eqref{psi1}$, we have 
\[
\ispa{\psi,\mu}_{\mb{C}^{2d+1}}=\sum_{j=1}^{d} (\psi_{j} \ol{a_{j}} + \psi_{d+1+j} \ol{a_{d+1+j}})
=\sum_{j=1}^{d}
\frac{1+z_{j}}{1-z_{j}}
\left(
|a_{j}|^{2}-|a_{d+1+j}|^{2}
\right)=0, 
\]
which shows that $0 \neq \psi \in \ecal(-1)$. Since $a_{d+1}=0$, we see 
\[
\eta(z)\psi=\sum_{j=1}^{d}
\frac{1-z_{j}}{1+z_{j}}
\left(
\psi_{j} \pmb{e}_{j} -\psi_{d+1+j}\pmb{e}_{d+1+j}
\right)
=\sum_{j=1}^{d}(a_{j}\pmb{e}_{j}+a_{d+1+j}\pmb{e}_{d+1+j})=\mu, 
\]
and hence $\eta(z)\psi \in \ecal(1)$. 

When $J(z) \neq \emptyset$, we set $\psi_{j}=\psi_{d+1+j}=0$ for $j \not\in J(z)$. 
We further define 
\[
\psi_{j}=a_{j},\quad \psi_{d+1+j}=-a_{d+1+j} \quad (j \in J(z)). 
\]
Finally, we set $\psi_{d+1}=1$. Then, $\psi \neq 0$ and 
we have 
\[
\ispa{\psi,\mu}_{\mb{C}^{2d+1}}=\sum_{j \in J(z)} \left(
\psi_{j} \ol{a_{j}}+\psi_{d+1+j} \ol{a_{d+1+j}}
\right)=
\sum_{j \in J(z)} \left(
|a_{j}|^{2}-|a_{d+1+j}|^{2}
\right)=0, 
\]
and hence $\psi \in \ecal(-1)$. We also have 
\[
\eta(z)\psi=\sum_{j \not\in J(z)}\frac{1-z_{j}}{1+z_{j}}(\psi_{j}\pmb{e}_{j}+\psi_{d+1+j}\pmb{e}_{d+1+j})=0
\]
by the definition of $\psi$. This shows that $0=\eta(z)\psi \in \ecal(1)$. Hence in this case 
the operator $U(C_{\mu})$ has eigenvalue $-1$ by Thoerem $\ref{Gro1}$. 
\end{proof}

\section{Examples}\label{EXA}

In this section, we discuss various examples where we can apply results in the previous sections. 
Theorems $\ref{Main1}$, $\ref{Main2}$ in Section $\ref{INTRO}$ will be proved in this section. 

\subsection{Grover walk on $\pmb{\ell^{2}(\mb{Z}^{d},\mb{C}^{2d})}$}\label{GroverZd}

This subsection is devoted to prove Theorem $\ref{Main1}$ in Section $\ref{INTRO}$. 
Let us consider the Grover walk $U(G_{2d})=U(S_{{\rm std}}, \pcal_{{\rm std}}, G_{2d})$ 
on $\ell^{2}(\mb{Z}^{d},\mb{C}^{2d})$, whose concrete form is given in $\eqref{UQW}$, 
where $S_{{\rm std}}$, $\pcal_{{\rm std}}$ is given in the paragraph containing $\eqref{std1}$, 
and the coin matrix $G_{2d}$ is given in $\eqref{GroC}$. 
For any positive integer $D$, we set 
\begin{equation}\label{GroV}
\mu_{D}=\frac{1}{\sqrt{D}}\,\!^{t}(1,\ldots,1) \in \mb{C}^{D}.  
\end{equation}
Then $G_{2d}=C_{\mu_{2d}}$ where $C_{\mu_{2d}}$ is given in $\eqref{RL1}$ with $\mu=\mu_{2d}$. 
Thus we have $\ecal(1)=\mb{C}\mu_{2d}$ and $\ecal(-1)=(\mb{C}\mu_{2d})^{\perp}=\mu_{2d}^{\perp}$. 
Since $\dsp \|P_{\pm u_{j}}\mu_{2d}\|_{\mb{C}^{2d}}=\frac{1}{\sqrt{2d}}$, 
we have $\|P_{\alpha}\mu_{2d}\|_{\mb{C}^{2d}}=\|P_{-\alpha}\mu_{2d}\|_{\mb{C}^{2d}}$ 
for any $\alpha \in S_{{\rm std}}$ and 
hence $U(G_{2d})$ has eigenvalue $1$ by Theorem $\ref{GroT}$. 

Next, let us consider whether $U(G_{2d})$ has eigenvalue $-1$ or not. 
Since Theorem $\ref{GroT}$ gives only a sufficient condition for $U(G_{2d})$ to have eigenvalue $-1$, 
let us use Theorem $\ref{Gro1}$. In this case, we have 
\[
\eta(z)=\sum_{j=1}^{d}\frac{1-z_{j}}{1+z_{j}}(P_{2j-1}-P_{2j}).
\]
Let us take $\psi \in \mb{C}^{2d}$ and write $\psi=\,\!^{t}(\psi_{1},\ldots,\psi_{2d})$. 
Then $\psi \in \ecal(-1)=\mu_{2d}^{\perp}$ if and only if $\psi_{1}+\cdots +\psi_{2d}=0$ 
by the definition of the vector $\mu_{2d}$. The subset $E$ in $T^{d}$ is given in $\eqref{exD}$. 
Now, let us take $z=(z_{1},\ldots,z_{d}) \in T^{d} \setminus E$. 
We set $J(z)=\{j \mid 1 \leq j \leq d,\, z_{j}=1\}$. 
When $J(z) \neq \emptyset$, we define $\psi=(\psi_{1},\ldots,\psi_{2d}) \in \mb{C}^{2d}$ by 
\[
\psi_{2j-1}=-\psi_{2j}=1 \ (j \in J(z)),\quad \psi_{2j-1}=\psi_{2j}=0 \ (j \not\in J(z)). 
\]
Then $\psi \neq 0$, $\psi \in \ecal(-1)$ and $\eta(z)\psi=0$.
When $J(z)=\emptyset$, we set 
\[
\psi_{2j-1}=-\psi_{2j}=\frac{1+z_{j}}{1-z_{j}} \ \ (j=1,\ldots,d). 
\]
Then, it is clear that $\psi \neq 0$, $\psi \in \ecal(-1)$ and $\eta(z)\psi=\sqrt{2d}\mu_{2d} \in \ecal(1)$. 
Therefore, by Theorem $\ref{Gro1}$, we conclude that $U(G_{2d})$ has eigenvalue $\pm 1$. 
Now, $\eqref{LTA1}$ of Theorem $\ref{Main1}$ in Section $\ref{INTRO}$ follows from the Wiener formula $\eqref{wie}$. 
\hfill$\blacksquare$

\subsection{Lazy Grover walk on $\pmb{\ell^{2}(\mb{Z}^{d},\mb{C}^{2d+1})}$}\label{GroverL}

This subsection is devoted to prove Theorem $\ref{Main2}$. 
We consider lazy Grover walk 
$U(G_{2d+1})=U(S_{{\rm lazy}}, \pcal_{{\rm lazy}}, G_{2d+1})$, whose concrete form is given in $\eqref{lazyD}$, 
where $\pcal_{{\rm lazy}}$ is given in the paragraph containing $\eqref{lazy1}$. 
We note that the Grover coin matrix $G_{2d+1}$ is still equal to $C_{\mu_{2d+1}}$ where $\mu_{2d+1}$ is 
given in $\eqref{GroV}$. By the same argument as in Subsection $\ref{GroverZd}$, 
it is shown that $U(G_{2d+1})$ has eigenvalue $1$. 
However, in this case, $U(G_{2d+1})$ do not have eigenvalue $-1$. 
Indeed, in this case we have ${\rm rank}\,P_{0}=1$ and $\dsp P_{0}\mu=\frac{1}{\sqrt{2d+1}}\pmb{e}_{d+1} \neq 0$. 
Hence Theorem $\ref{EVN1}$ shows that $U(G_{2d+1})$ does not have eigenvalue $-1$. 

Since $\sigma(G_{2d+1})=\{\pm 1\}$, the lazy Grover walk $U(G_{2d+1})$ has only eigenvalue $1$. 
The second part of Theorem $\ref{Main2}$ follows from Corollary 1.5 in \cite{Ta1}. 
\hfill$\blacksquare$

\subsection{Deformation of Grover walks}\label{Defo}

In \cite{WKKK}, a homogeneous quantum walks with coin matrices given by 
\begin{equation}\label{WPF1}
\begin{pmatrix}
-p & q & \sqrt{pq} & \sqrt{pq} \\
q & -p & \sqrt{pq} & \sqrt{pq} \\
\sqrt{pq} & \sqrt{pq} & -q & p \\
\sqrt{pq} & \sqrt{pq} & p & -q
\end{pmatrix}
\end{equation}
with $0<p=1-q<1$ was investigated. In particular the limit distributions of the quantum walks defined by the 
coin matrix $\eqref{WPF1}$ are computed in \cite{WKKK}. 
The coin matrix $\eqref{WPF1}$ is written as $C_{\mu(p)}$ where $\mu(p)$ is given by 
\[
\mu(p)=\frac{1}{\sqrt{2}}\,\!^{t}(\sqrt{q},\sqrt{q},\sqrt{p},\sqrt{p}). 
\]
Then by Theorem $\ref{GroT}$ it is proved in a similar way as in Subsection $\ref{GroverZd}$ 
that $U(S_{{\rm std}}, \pcal_{{\rm std}}C_{\mu(p)})$ (on $\ell^{2}(\mb{Z}^{2},\mb{C}^{4})$) has both eigenvalue $\pm 1$. 
We remark that the coin matrix $\eqref{WPF1}$ is a one-parameter deformation of the Grover coin matrix $G_{4}$,  
because we have $G_{4}=C_{\mu(1/2)}$. Such a deformation was further considered in \cite{SBJ}. Indeed in \cite{SBJ} 
a dynamical behavior, such as peak velocities, of the quantum walk with the coin matrix
\[
C_{2}(\rho)=
\begin{pmatrix}
-\rho^{2} & \rho\sqrt{2(1-\rho^{2})} & 1-\rho^{2} \\
\rho \sqrt{2(1-\rho^{2})} & 2\rho^{2}-1 & \rho \sqrt{2(1-\rho^{2})} \\
1-\rho^{2} & \rho \sqrt{2(1-\rho^{2})} & -\rho^{2}
\end{pmatrix}
\]
is investigated. The coin matrix $C_{2}(\rho)$ can by written as 
\[
C_{2}(\rho)=C_{\mu(\rho)},\quad \mu(\rho)=v_{3}(\rho)=\,\!^{t}\!\left(
\sqrt{\frac{1-\rho^{2}}{2}}, \, \rho, \, \sqrt{\frac{1-\rho^{2}}{2}}
\right), 
\]
and it satisfies the condition in Theorem $\ref{GroT}$ for eigenvalue $1$. 
Therefore, the unitary operator $U(C_{2}(\rho))=U(S_{{\rm lazy}}, \pcal_{{\rm lazy}}, C_{2}(\rho))$ in 
one dimension has eigenvalue $1$. 
It is mentioned in \cite{SBJ} that $U(C_{2}(\rho))$ do not have eigenvalue $-1$. 
Indeed, when $\rho \neq 0$, this follows from Theorem $\ref{EVN1}$. 
But for $\rho=0$, straightforward computation shows that the unitary operator $U(C_{2}(0))$ has the eigenvalues $\pm 1$.
 
These deformations of Grover walks can be generalized. 
Let us explain it for lazy quantum walks $U(C_{\mu})=U(S_{{\rm lazy}}, \pcal_{{\rm lazy}}, C_{\mu})$ of reflection type 
with a unit vector $\mu \in \mb{C}^{2d+1}$. 
Let us suppose that the unit vector $\mu=\,\!^{t}(a_{1},\ldots,a_{2d+1})$ in $\mb{C}^{2d+1}$ satisfies 
$|a_{j}|=|a_{d+1+j}|$ for $j=1,\ldots,d$. 
This assumption is nothing but the condition 
for eigenvalue $1$ in Theorem $\ref{GroT}$. 
Then, the vectors 
\[
\nu=\,\!^{t}(a_{1},a_{2},\ldots,a_{d}),\quad 
\tilde{\nu}=\,\!^{t}(a_{d+2},a_{d+3},\ldots,a_{2d+1}),
\]
have the same norm, and we can set $a_{d+1+j}=e^{\sqrt{-1}\theta_{j}}a_{j}$ for some $\theta_{j} \in \mb{R}$. 
For simplicity, we write 
\[
p=\|\nu\|,\quad \rho=|a_{d+1}|=\sqrt{1-2p^{2}},\quad 
a_{d+1}=\rho e^{\sqrt{-1}\varphi}. 
\]
Denoting the $d \times d$ diagonal unitary matrix ${\rm diag}(e^{\sqrt{-1}\theta_{1}},\ldots, e^{\sqrt{-1}\theta_{d}})$ by $D$, 
we have $\tilde{\nu}=D\nu$ and $\mu=\,\!^{t}[\nu, \rho e^{\sqrt{-1} \varphi}, D\nu]$. 
If $\nu=0$, then $\mu=\rho e^{\sqrt{-1}\varphi}\pmb{e}_{d+1}$. In this case, $U(C_{\mu})$ is rather easy to handle 
because $\wh{C_{\mu}}(z)$ is a diagonal matrix and the corresponding unitary operator $U(C_{\mu})$ has only 
eigenvalue $1$. Thus, we may suppose that $\nu \neq 0$. 
Since $\nu$ and $\dsp \nu_{0}=p\mu_{d}=\frac{p}{\sqrt{d}}(1,\ldots,1) \in \mb{C}^{d}$ lie on the $2d-1$-dimensional 
sphere of radius $p$, there exists a smooth curve $\nu(t)$ ($t \in [0,1]$) in $\mb{C}^{d}$ such that 
\begin{equation}\label{spath1}
\|\nu(t)\|=p \ \ (t \in [0,1]),\quad \nu(0)=\nu_{0},\quad \nu(1)=\nu. 
\end{equation}
For example, we can take, as a curve $\nu(t)$, a geodesic (a part of a great circle) on the $2d-1$-dimensional 
sphere joining $\nu_{0}$ and $\nu$. 
For $t \in [0,1]$, we define the $d \times d$ unitary diagonal matrix $D(t)$ by 
\begin{equation}\label{spath2}
D(t)={\rm diag}(e^{\sqrt{-1}t \theta_{1}},\ldots,e^{\sqrt{-1}t \theta_{d}}) \quad 
\mbox{so that} \quad D(0)=I_{d} \quad D(1)=D.  
\end{equation}
Let $f(t)$ be an arbitrary (smooth) real-valued function in $t \in [0,1]$ such that 
\begin{equation}\label{spath3}
0 \leq f(t) \leq \frac{1}{p\sqrt{2}} \ (t \in [0,1]) \quad f(0)=\frac{1}{p}\sqrt{\frac{d}{2d+1}},\quad f(1)=1. 
\end{equation}
Finally, we define a function $\rho(t)$ in $t \in [0,1]$ by 
\begin{equation}\label{spath4}
\rho(t)=\sqrt{1-2p^{2}f(t)^{2}} \quad \mbox{so that} \quad 
\rho(0)=\frac{1}{\sqrt{2d+1}},\quad \rho(1)=\sqrt{1-2p^{2}}=\rho. 
\end{equation}
Then, the vector $\mu(t)$ ($t \in [0,1]$) defined by 
\begin{equation}\label{spath5}
\mu(t)=
\begin{bmatrix}
f(t)\nu(t) \\
\rho(t)e^{\sqrt{-1}t\varphi} \\
f(t) D(t) \nu(t)
\end{bmatrix}
\in \mb{C}^{2d+1}
\end{equation}
satisfies 
\[
\mu(0)=\mu_{2d+1}=\frac{1}{\sqrt{2d+1}}(1,\ldots,1),\quad 
\mu(1)=\mu,\quad \|\mu(t)\|_{\mb{C}^{2d+1}}=1 \ (t \in [0,1]). 
\]
Therefore, $C_{\mu(t)}$ intertwines $C_{\mu}$ with $G_{2d+1}$. 
By the construction, it is clear from Theorem $\ref{GroT}$ that 
the corresponding lazy quantum walk $U(C_{\mu(t)})=U(S_{{\rm lazy}}, \pcal_{{\rm lazy}}, C_{\mu(t)})$ has 
the eigenvalue $1$ for any $t \in [0,1]$. 
If $(d+1)$-component of $\mu(t)$ does not vanish, then $U(C_{\mu(t)})$ does not have $-1$ as an eigenvalue. 
However, if $(d+1)$-component of $\mu(t)$ vanishes, then $U(C_{\mu(t)})$ has eigenvalue $-1$ by Theorem $\ref{EVNN}$. 
In particular, when $\rho=0$, which means $a_{d+1}=0$, $U(C_{\mu(1)})=U(C_{\mu})$ has both of 
eigenvalue $\pm 1$, but, when $f(t)<1$ for $t<1$, $U(C_{\mu(t)})$ does not have eigenvalue $-1$ 
because $(d+1)$-component of $\mu(t)$ ($t<1$) does not vanish. 
Summarizing the above argument, we have obtained the following proposition. 
\begin{prop}\label{deform1}
For any unit vector $\mu=\,\!^{t}(a_{1},\ldots,a_{2d+1}) \in \mb{C}^{2d+1}$ satisfying 
$|a_{j}|=|a_{d+1+j}|$ for $j=1,\ldots,d$. 
Let us assume also that $a_{d+1}=0$. 
Then there is a one-parameter familiy of 
unit vectors $\mu(t)$ $(t \in [0,1])$ satisfying the following properties. 
\begin{enumerate}
\item $\mu(t)$ is smooth in $t \in [0,1)$. 
\item $\mu(0)=\mu_{2d+1}$ and $\mu(1)=\mu$. 
\item The unitary operator $U(C_{\mu(t)})=U(S_{{\rm lazy}}, \pcal_{{\rm lazy}}, C_{\mu(t)})$ has 
eigenvalue $1$ for each $t \in [0,1]$. 
\item For each $t \in [0,1)$, $U(C_{\mu(t)})$ does not have $-1$ as an eigenvalue. 
\item $U(C_{\mu(1)})=U(S_{{\rm lazy}}, \pcal_{{\rm lazy}}, C_{\mu})$ has the eigenvalue $-1$. 
\end{enumerate}
Such a one-parameter deformation $\mu(t)$ is concretely constructed 
by the procedure $\eqref{spath1}$, $\eqref{spath2}$, $\eqref{spath3}$, $\eqref{spath4}$ and $\eqref{spath5}$ 
with a choice of a smooth function $f(t)$ such that $f(t)<1$ for $t \in [0,1)$. 
\end{prop}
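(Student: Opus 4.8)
The plan is to verify directly that the explicit family $\mu(t)$ built in \eqref{spath1}--\eqref{spath5} has the five listed properties; by construction almost everything reduces to substitution into the defining formulas, and the only point that needs real attention is the endpoint behavior at $t=1$.

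First I would record the normalizations forced by the hypothesis $a_{d+1}=0$. From $1=\|\mu\|^{2}=2p^{2}+\rho^{2}$ and $\rho=|a_{d+1}|=0$ we get $p=\|\nu\|=1/\sqrt{2}$, so the bound in \eqref{spath3} reads $0\le f(t)\le 1$ and $\rho(t)^{2}=1-2p^{2}f(t)^{2}=1-f(t)^{2}$. For $t\in[0,1)$ the extra requirement $f(t)<1$ gives $\rho(t)>0$, hence $\rho(t)$ is smooth there; together with smoothness of $\nu(t)$, $D(t)$ and $e^{\sqrt{-1}t\varphi}$ this yields property (1). It also explains why (1) is asserted only on $[0,1)$: at $t=1$ one has $f(1)=1$, hence $\rho(1)=0$, exactly where the square root loses smoothness. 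The existence of the curve $\nu(t)$ is not an obstacle, since $\nu_{0}=p\mu_{d}$ and $\nu$ lie on the same connected sphere of radius $p$ in $\mb{C}^{d}$, so a great-circle arc suffices.

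Next I would check property (2) and the unit-norm condition. Using $f(0)\nu(0)=\frac{1}{p}\sqrt{\frac{d}{2d+1}}\,p\mu_{d}=\frac{1}{\sqrt{2d+1}}\,{}^{t}(1,\dots,1)$, $\rho(0)=\sqrt{1-\frac{2d}{2d+1}}=\frac{1}{\sqrt{2d+1}}$ and $D(0)=I_{d}$, formula \eqref{spath5} gives $\mu(0)=\mu_{2d+1}$; while $f(1)=1$, $\nu(1)=\nu$, $\rho(1)=0$, $D(1)=D$, $D\nu=\tilde\nu$ give $\mu(1)=\mu$. Normalization is the identity $\|\mu(t)\|^{2}=2p^{2}f(t)^{2}+\rho(t)^{2}=1$, which uses only $\|D(t)\nu(t)\|=\|\nu(t)\|=p$ and the definition of $\rho(t)$. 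For property (3), the $j$-th and $(d+1+j)$-th components of $\mu(t)$ are $f(t)\nu(t)_{j}$ and $f(t)e^{\sqrt{-1}t\theta_{j}}\nu(t)_{j}$, which have equal modulus for every $j=1,\dots,d$; this is precisely the hypothesis of Theorem $\ref{GroT}$ with $\ecal(1)=\mb{C}\mu(t)$, so $U(C_{\mu(t)})$ has eigenvalue $1$ for all $t\in[0,1]$.

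Finally, properties (4) and (5) follow from the criteria already proved. For (4), when $t\in[0,1)$ the $(d+1)$-component $\rho(t)e^{\sqrt{-1}t\varphi}$ of $\mu(t)$ is nonzero, so $P_{0}\mu(t)\ne 0$; since $0\in S_{\rm lazy}$ and ${\rm rank}\,P_{0}=1$ for $\pcal_{\rm lazy}$, Theorem $\ref{EVN1}$ shows $U(C_{\mu(t)})$ has no eigenvalue $-1$. For (5), $\mu(1)=\mu$ satisfies $|a_{j}|=|a_{d+1+j}|$ and $a_{d+1}=0$, so Theorem $\ref{EVNN}$ gives that $U(C_{\mu})$ has both $\pm 1$ as eigenvalues. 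The last sentence of the proposition is just the remark that in \eqref{spath1}--\eqref{spath5} one may take $\nu(t)$ a geodesic arc and $f$ any smooth function satisfying \eqref{spath3} with $f<1$ on $[0,1)$. The one real tension the argument has to manage is this last constraint: $f$ must stay below $1$ on $[0,1)$ so that $P_{0}\mu(t)\ne 0$ there (giving (4)), yet reach $1$ at $t=1$ so that $\mu(1)=\mu$ (giving (5)); everything else is direct computation, which I expect to pose no difficulty.
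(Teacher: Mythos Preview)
Your proposal is correct and follows essentially the same approach as the paper: the proposition is stated as a summary of the construction \eqref{spath1}--\eqref{spath5}, and the paper verifies properties (2)--(5) exactly as you do, by direct substitution together with Theorem~\ref{GroT} for (3), Theorem~\ref{EVN1} for (4), and Theorem~\ref{EVNN} for (5). Your explicit observation that $a_{d+1}=0$ forces $p=1/\sqrt{2}$ and hence $\rho(t)=\sqrt{1-f(t)^{2}}$, together with your remark on why smoothness may fail at $t=1$, is a helpful clarification that the paper leaves implicit.
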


\subsection{The Grover walk on the triangular lattice}\label{TRI}

Our setting-up also works well for Grover walks on certain crystal lattices, 
since we can choose the set $S$ of steps and the resolution of unity $\pcal$ rather arbitrarily.  
For example, let $u_{1},u_{2}$ be the standard basis of $\mb{Z}^{2}$ and $u_{3}=(1,1)$. 
We set $S=\{\pm u_{i} \mid i=1,2,3\}$. We define $P_{u_{i}}=P_{2i-1}$ and $P_{-u_{i}}=P_{2i}$. 
Then we can consider the Grover walk $U(G_{6})=U(S,\pcal,G_{6})$ on $\ell^{2}(\mb{Z}^{2},\mb{C}^{6})$ where
$\pcal=\{P_{\alpha}\}_{\alpha \in S}$. 
The following corollary is a direct consequence of Theorem $\ref{GroT}$. 
\begin{cor}\label{TriT}
The unitary operator $U(S,\pcal,G_{6})$ defined above has both of eigenvalue $\pm 1$. 
\end{cor}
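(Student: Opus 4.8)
The plan is to handle the two candidate eigenvalues $+1$ and $-1$ separately, following step by step the treatment of the Grover walk on $\ell^{2}(\mb{Z}^{d},\mb{C}^{2d})$ carried out in Subsection \ref{GroverZd}. First note that $G_{6}=C_{\mu_{6}}$ with $\mu_{6}=\frac{1}{\sqrt{6}}\,\!^{t}(1,\ldots,1)\in\mb{C}^{6}$, so that $\ecal(1)=\mb{C}\mu_{6}$ and $\ecal(-1)=\mu_{6}^{\perp}$, and that each $P_{\alpha}$ ($\alpha\in S$) is the rank-one projection onto a coordinate axis of $\mb{C}^{6}$, with $P_{u_{i}}=P_{2i-1}$ and $P_{-u_{i}}=P_{2i}$. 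Since $S=\{\pm u_{1},\pm u_{2},\pm u_{3}\}$ is symmetric about the origin and all entries of $\mu_{6}$ have the same modulus, $\|P_{\alpha}\mu_{6}\|_{\mb{C}^{6}}=\|P_{-\alpha}\mu_{6}\|_{\mb{C}^{6}}=1/\sqrt{6}$ for every $\alpha\in S$; hence the hypothesis of Theorem \ref{GroT} for the eigenvalue $1$ is met and $U(S,\pcal,G_{6})$ has $1$ as an eigenvalue. (Because $G_{6}$ is of reflection type and the same norm condition holds, Theorem \ref{EGS1} moreover describes the associated eigenspace explicitly.)

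For the eigenvalue $-1$, Theorem \ref{GroT} is unavailable, since $\ecal(-1)=\mu_{6}^{\perp}$ is five-dimensional and the equality $\|P_{\alpha}\phi\|=\|P_{-\alpha}\phi\|$ fails on it (take e.g. $\phi=\,\!^{t}(1,0,-1,0,0,0)\in\mu_{6}^{\perp}$). So I would instead verify condition (4) of Theorem \ref{Gro1} directly. Writing $w_{i}=z^{u_{i}}$, so that $w_{1}=z_{1}$, $w_{2}=z_{2}$, $w_{3}=z_{1}z_{2}$, and grouping each $\alpha$ with $-\alpha$ in \eqref{FUTOG} gives
\[
\eta(z)=\sum_{i=1}^{3}\frac{1-w_{i}}{1+w_{i}}\bigl(P_{2i-1}-P_{2i}\bigr),
\qquad
E=\{z\in T^{2}\mid w_{i}=-1\ \text{for some}\ i\}.
\]
Fix $z\in T^{2}\setminus E$ and set $J(z)=\{i\in\{1,2,3\}\mid w_{i}=1\}$. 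If $J(z)\neq\emptyset$, put $\psi_{2i-1}=-\psi_{2i}=1$ for $i\in J(z)$ and $\psi_{2i-1}=\psi_{2i}=0$ otherwise; then $\psi\neq0$, $\sum_{j}\psi_{j}=0$ so $\psi\in\ecal(-1)$, and $\eta(z)\psi=0\in\ecal(1)$, since the coefficient $\frac{1-w_{i}}{1+w_{i}}$ vanishes for $i\in J(z)$ while the remaining coordinates of $\psi$ vanish. If $J(z)=\emptyset$, put $\psi_{2i-1}=-\psi_{2i}=\frac{1+w_{i}}{1-w_{i}}$ for $i=1,2,3$; each entry is finite (as $w_{i}\neq1$) and nonzero (as $w_{i}\neq-1$), so $\psi\neq0$, again $\sum_{j}\psi_{j}=0$ so $\psi\in\ecal(-1)$, and $\eta(z)\psi=\,\!^{t}(1,1,1,1,1,1)=\sqrt{6}\,\mu_{6}\in\ecal(1)$. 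In either case the nonzero vector $\psi\in\ecal(-1)$ satisfies $\eta(z)\psi\in\ecal(1)$, so Theorem \ref{Gro1} yields the eigenvalue $-1$, which completes the proof of the corollary.

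The computation is mechanical and mirrors Subsection \ref{GroverZd} almost verbatim, so I do not anticipate a genuine obstacle. The one point requiring a moment's care is that the three characters $z\mapsto w_{i}$ are not independent — $u_{3}=u_{1}+u_{2}$ forces $w_{3}=w_{1}w_{2}$, so $J(z)$ can never equal $\{1,2\}$, $\{1,3\}$ or $\{2,3\}$ — but since the construction of $\psi$ above treats each index $i$ separately and nowhere uses independence of the $w_{i}$, this dependence is harmless. If anything, the only subtlety is conceptual: the nominal appeal to Theorem \ref{GroT} produces only the eigenvalue $1$, and Theorem \ref{Gro1} must be invoked for $-1$, exactly as in the $\mb{Z}^{d}$ case.
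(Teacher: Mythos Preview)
Your argument is correct and follows exactly the template the paper itself uses in Subsection~\ref{GroverZd} for the standard Grover walk: Theorem~\ref{GroT} for the eigenvalue $+1$, and the direct verification of condition~(4) in Theorem~\ref{Gro1} for the eigenvalue $-1$, with the same case split on $J(z)$. The paper's own justification for Corollary~\ref{TriT} is the single sentence ``a direct consequence of Theorem~\ref{GroT}''; taken literally this covers only the eigenvalue $+1$, since (as you correctly note) the hypothesis of the second part of Theorem~\ref{GroT} fails on $\ecal(-1)=\mu_{6}^{\perp}$. Your write-up therefore supplies the detail the paper omits, and does so by the method the paper itself employs elsewhere. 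Your closing remark about the dependence $w_{3}=w_{1}w_{2}$ is accurate and harmless for the construction.
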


\section{Product of two quantum walks of Grover type}\label{PRO}

In this section, we discuss the eigenvalue of a product of two quantum walks of Grover type, 
namely we consider the following unitary operator
\[
U_{C}:=\scal^{*} C\scal C=\left(
\sum_{\alpha \in S}\tau^{-\alpha}P_{\alpha}C
\right)
\left(
\sum_{\alpha \in S} \tau^{\alpha} P_{\alpha}C
\right),
\]
where $C$ is a coin matrix of Grover type. 
We note that, in this section, {\it we do not assume that the set $S$ of steps is symmetric about the origin}. 
This kind of operator naturally arise when quantum walks on the triangular lattice is considered.
See Corollary $\ref{PrG}$ below. Note that the unitary operator $U_{C}$ is a PUTO on $\ell^{2}(\mb{Z}^{d},\mb{C}^{D})$, 
and the corresponding unitary-matrix valued function on the torus $T^{d}$ is given by 
\begin{equation}\label{PUTOP}
\wh{U}_{C}(z)=V(z)^{*}CV(z)C=\left(
\sum_{\alpha \in S}z^{-\alpha}P_{\alpha}C
\right)
\left(
\sum_{\alpha \in S} z^{\alpha} P_{\alpha}C
\right),  
\end{equation}
where $V(z)$ is defined in $\eqref{MV1}$. 
As before, let $\ecal(\pm 1)$ denote the eigenspace of $C$ with eigenvalue $\pm 1$, respectively, 
and let $\pi_{\pm}$ denote the orthogonal projection onto $\ecal(\pm 1)$, respectively. 
It is straightforward, using the equation $C=\pi_{+}-\pi_{-}$, to see the following. 
\begin{equation}\label{PM1}
\wh{U}_{C}(z) \pm I=\pm 2 V(z)(\pi_{\pm} V(z)^{*} \pi_{+} +\pi_{\mp}V(z)^{*} \pi_{-}). 
\end{equation}
\begin{thm}\label{PUTOPT}
Let $U_{C}$ be as above. If $\dim \ecal(1) < \dim \ecal(-1)$, then $U_{C}$ has an eigenvalue $1$. 
Hence the quantum walks defined by $U_{C}$ starting at the origin with an initial state is localized at some points. 
\end{thm}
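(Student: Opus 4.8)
The plan is to combine the identity \eqref{PM1} with Lemma \ref{twist1}. Since $U_C$ is a PUTO whose matrix symbol on $T^d$ is $\wh{U}_C(z)$, the same Fourier-conjugation argument that proves Lemma \ref{twist1} shows that $U_C$ has eigenvalue $1$ if and only if $\wh{U}_C(z)$ has eigenvalue $1$ for every $z \in T^d$; hence it suffices to check that $\wh{U}_C(z) - I$ is a singular matrix for each fixed $z$.

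First I would record that $V(z) = \sum_{\alpha \in S} z^\alpha P_\alpha$ is unitary for $z \in T^d$, since $V(z)V(z)^* = \sum_{\alpha \in S} |z^\alpha|^2 P_\alpha = I$. Taking the lower sign in \eqref{PM1},
\[
\wh{U}_C(z) - I = -2\,V(z)\bigl( \pi_- V(z)^* \pi_+ + \pi_+ V(z)^* \pi_- \bigr),
\]
so by invertibility of $V(z)$ the matrix $\wh{U}_C(z) - I$ is singular precisely when the operator $T(z) := \pi_- V(z)^* \pi_+ + \pi_+ V(z)^* \pi_-$ is singular.

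To see that $T(z)$ is always singular when $\dim \ecal(1) < \dim \ecal(-1)$, I would use the orthogonal decomposition $\mb{C}^D = \ecal(1) \oplus \ecal(-1)$. With respect to it, $\pi_- V(z)^* \pi_+$ sends $\ecal(1)$ into $\ecal(-1)$ and annihilates $\ecal(-1)$, while $\pi_+ V(z)^* \pi_-$ sends $\ecal(-1)$ into $\ecal(1)$ and annihilates $\ecal(1)$, so $T(z)$ has off-diagonal block form; its block $\ecal(-1) \to \ecal(1)$ is the restriction $B(z) \colon \ecal(-1) \to \ecal(1)$ of $\pi_+ V(z)^*$. Since $B(z)$ maps a space of dimension $\dim \ecal(-1)$ into one of strictly smaller dimension $\dim \ecal(1)$, it has a non-zero kernel. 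For any $0 \neq \psi \in \ker B(z) \subset \ecal(-1)$ we have $\pi_+\psi = 0$ and $\pi_-\psi = \psi$, hence $T(z)\psi = \pi_+ V(z)^*\psi = B(z)\psi = 0$. Thus $\wh{U}_C(z) - I$ is singular for every $z \in T^d$, so $1 \in \sigma(\wh{U}_C(z))$ for all $z$, and Lemma \ref{twist1} yields that $1$ is an eigenvalue of $U_C$. The localization claim then follows from Lemma \ref{PL1}.

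There is no serious computational obstacle here: all the substance is already packaged in \eqref{PM1}. The only points needing care are the (routine) verification that $V(z)$ is unitary, the elementary observation that a linear map into a space of strictly smaller dimension must have a kernel, and the remark that Lemma \ref{twist1} — whose statement is phrased for operators of the form \eqref{PUTO1} — applies equally to the PUTO $U_C$ with $\wh{U}_C(z)$ in place of $\wh{C}(z)$, together with the fact that the argument above produces the eigenvalue $1$ of $\wh{U}_C(z)$ for \emph{every} $z \in T^d$, which is exactly what the lemma requires.
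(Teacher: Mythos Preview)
Your proof is correct and follows essentially the same route as the paper: both arguments use \eqref{PM1} (with the lower sign) and then exhibit, via a dimension count, a nonzero $\psi \in \ecal(-1)$ with $\pi_{+}V(z)^{*}\psi=0$. The paper phrases this as choosing $\psi$ orthogonal to $\pi_{-}V(z)\ecal(1)$, while you phrase it as $\psi$ lying in the kernel of $\pi_{+}V(z)^{*}|_{\ecal(-1)}:\ecal(-1)\to\ecal(1)$; these are adjoint formulations of the same linear-algebra fact.
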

\begin{proof}
For each fixed $z \in T^{d}$, define the subspace $\ve(z)$ in $\mb{C}^{D}$ by
\[
\ve(z)=\pi_{-}V(z)\ecal(1). 
\]
By definition and the assumption, $\ve(z)$ is a proper subspace of $\ecal(-1)$. 
For fixed $z \in T^{d}$, we choose a non-zero vector $\psi(z)\in \ecal(-1)$ such that $\psi(z) \perp \ve(z)$. 
For any $\phi \in \ecal(1)$, we see 
\[
\ispa{\phi,V(z)^{*}\psi(z)}_{\mb{C}^{D}}=\ispa{V(z)\phi,\psi(z)}_{\mb{C}^{D}}=\ispa{\pi_{-}V(z)\phi,\psi(z)}_{\mb{C}^{D}}=0, 
\]
which shows that $V(z)^{*}\psi(z) \in \ecal(1)^{\perp}=\ecal(-1)$, and hence $\pi_{+}V(z)^{*}\psi(z)=0$. 
Since $\pi_{+}\psi(z)=0$, we see $\wh{U}_{C}(z)\psi(z)-\psi(z)=0$ by $\eqref{PM1}$. Thus, $\wh{U}_{C}(z)$ has 
an eigenvalue $1$ for all $z \in T^{d}$, and hence $U_{C}$ has an eigenvalue $1$. 
\end{proof}

As an example let $S=\{u_{1},u_{2},u_{3}\}$ where $\{u_{1},u_{2}\}$ 
is the standard basis of $\mb{Z}^{2}$ and $u_{3}=(-1,-1)$. 
We define a resolution of unity $\pcal=\{P_{\alpha}\}_{\alpha \in S}$ on $\mb{C}^{3}$ 
by setting $P_{u_{i}}=P_{i}$ for $i=1,2,3$. 
Then we can consider the PUTO $U(S,\pcal,G_{3})$ defined in $\eqref{PUTOP}$ with $C=G_{3}$. 
By Theorem $\ref{PUTOPT}$, we have the following. 
\begin{cor}\label{PrG}
The unitary operator $U(S,\pcal,G_{3})$ has the eigenvalue $1$. 
\end{cor}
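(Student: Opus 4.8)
The plan is to derive Corollary \ref{PrG} as an immediate application of Theorem \ref{PUTOPT}. The operator in question is $U(S,\pcal,G_3)=\scal^*G_3\scal G_3$ with $\scal=\sum_{\alpha\in S}\tau^\alpha P_\alpha$, $S=\{u_1,u_2,u_3\}$, $P_{u_i}=P_i$ (the rank-one projection onto $\mb{C}\pmb{e}_i$ in $\mb{C}^3$), and $C=G_3$. By hypothesis this is exactly the form of operator treated in Section \ref{PRO}: the set $S$ need not be symmetric about the origin, and $C=G_3$ is of Grover type since $G_3^2=I$ and $G_3$ is not a scalar multiple of the identity.

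First I would record the eigenspace data for $C=G_3$. By \eqref{GroC}, $G_3=\tfrac23 J-I$, so $G_3=C_{\mu_3}$ in the notation of \eqref{RL1} with $\mu_3=\tfrac{1}{\sqrt3}{}^t(1,1,1)$; hence $\ecal(1)=\mb{C}\mu_3$ is one-dimensional and $\ecal(-1)=\mu_3^{\perp}$ is two-dimensional. Therefore $\dim\ecal(1)=1<2=\dim\ecal(-1)$, which is precisely the hypothesis of Theorem \ref{PUTOPT}.

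Then the conclusion is immediate: Theorem \ref{PUTOPT} gives that $U_C=U(S,\pcal,G_3)$ has the eigenvalue $1$. (If one wishes, one may also invoke Lemma \ref{PL1} to note that the corresponding quantum walk starting at the origin with a suitable initial state is localized at a point, but the statement of the corollary only asks for the eigenvalue.) There is essentially no obstacle here — the only thing to verify is the dimension count $\dim\ecal(1)<\dim\ecal(-1)$ for the Grover matrix $G_3$, which is the content of the fact that $G_3=C_{\mu_3}$ is a reflection (up to sign) through a single vector. Everything else is supplied verbatim by Theorem \ref{PUTOPT}, whose proof constructs, for each $z\in T^d$, an explicit eigenvector $\psi(z)\in\ecal(-1)$ orthogonal to the proper subspace $\pi_-V(z)\ecal(1)$ and checks $\wh U_C(z)\psi(z)=\psi(z)$ via \eqref{PM1}.
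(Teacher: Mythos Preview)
Your proof is correct and matches the paper's approach exactly: the paper derives the corollary directly from Theorem~\ref{PUTOPT}, and the only content is the observation that for $G_{3}=C_{\mu_{3}}$ one has $\dim\ecal(1)=1<2=\dim\ecal(-1)$, which you supply.
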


\vspace{30pt}


\begin{thebibliography}{99}
\bibitem{ADZ} Y.~Aharonov, L.~Davidovich and N.~Zagury, 
Quantum random walks, {\it Phys. Rev. A}, vol. {\bf 48}, no.~2 (1993), 1687--1690. 

\bibitem{Ke}
J.~Kempe, Quantum random walks -- an introductory overview, 
Contemporary Physics, vol. {\bf 44}, no.~4 (2003), 307--327. 

\bibitem{Ko2}
N.~Konno, Quantum walks, {\it Quantum potential theory}, 309--452, Lecture Note in Math., {\bf 1954}, Springer, Berline, 2008. 

\bibitem{IKS}
N.~Inui, N.~Konno and E.~Segawa, 
One-dimensional three-state quantum walk, 
{\it Phys. Rev. E}, {\bf 72} (2005), 056112. 

\bibitem{IK}
N.~Inui and N.~Konno, 
Localization of multi-state quantum walk in one dimension, 
{\it Physica A}, {\bf 353} (2005), 133--144.

\bibitem{Ta1}
T.~Tate, Eigenvalues, absolute continuity and localizations 
for periodic unitary transition operators, preprint, arXiv:1411.4215v2 (2017). 

\bibitem{W} J.~Watrous, Quantum simulations of classical random walks and undirected graph connectivity, 
{\it J. Comput. System Sci.} {\bf 62} (2001), 376--391. 


\bibitem{MBSS}T.~D.~Mackay, S.~D.~Bartlett, L.~T.~Stephanson and B.~C.~Sanders, 
Quantum walks in higher dimensions, {\it J. Phys A: Math. Gen.} {\bf 35} (2001), 2745--2753. 

\bibitem{G} L.~K.~Grover, Quantum Mechanics Helps in Searching for a Needle in a Haystack, 
{\it Phys. Rev. Let.} {\bf 79}, no.~2  (1997), 325--328. 


\bibitem{WKKK} K.~Watabe, N.~Kobayashi, M.~Katori and N.~Konno, 
Limit distributions of two-dimensional quantum walks, {\it Phys. Rev. A} {\bf 77} (2008), 062331/1--9. 

\bibitem{BD}
T.~Br\"{o}cker and T.~tom Dieck, \emph{Representations of Compact Lie Groups}, 
Graduate Texts in Math., 98. Springer-Verlag, New York, 1985. 

\bibitem{SBJ} M.~\v{S}tefa\v{n}\'{a}k, I.~Bezd\v{e}kov\'{a}, and I.~Jex, 
Continuous deformations of the Grover walk preserving localization, 
{\it Eur. Phys. J.} D {\bf 66}, no.~5 (2012).

\bibitem{Ma} V.~B Matveev, 
Intertwining relations between the Fourier transform and 
discrete Fourier transform, the related functional identities and beyond, 
{\it Inverse Problems} {\bf 17} (2001), 633--657. 


\end{thebibliography}
\end{document}